\theoremstyle{definition}
\numberwithin{equation}{section}
\newtheorem{thm}{Theorem}[section]
\newtheorem*{thm*}{Theorem}
\newtheorem*{defn*}{Definition}
\newtheorem*{propn*}{Proposition}
\newtheorem{lemma}[thm]{Lemma}
\newtheorem*{lemma*}{Lemma}
\newtheorem{cor}[thm]{Corollary}
\newtheorem*{cor*}{Corollary}
\newtheorem*{eg*}{Example}
\newcommand{\bbC}{\mathbb{C}}
\newcommand{\bbZ}{\mathbb{Z}}
\newcommand{\summn}{\sum_{m_2\in\mathbb{Z}}}
\newcommand{\summpnp}{\sum_{n_2\in\mathbb{Z}}}
\newcommand{\summppnpp}{\sum_{n_3\in\mathbb{Z}}}
\newcommand{\summnmpnp}{\sum_{m_2,n_2 \in \mathbb{Z}}}
\newcommand{\summpnpmppnpp}{\sum_{ n_3,n_2 \in \mathbb{Z}}}
\newcommand{\summnmpnpmppnpp}{\sum_{m_2,n_2,n_3\in \mathbb{Z}}}
\def\p#1{\frac{\partial}{\partial #1}} 
\begin{document}
\title{A family of representations of the affine Lie superalgebra $\widehat{\mathfrak{gl}_{m|n}}(\bbC)$}
\author[1]{Yongjie Wang}
\author[1]{Hongjia Chen\footnote{ Correspondence: hjchen@ustc.edu.cn.}
\setcounter{footnote}{-1}\footnote{The second author is partially supported by the Recruitment Program of Global Youth Experts of China,
by the start-up funding from University of Science and Technology of China, by NSF of China (Grant 11401551). The third author is partially supported by NSERC of Canada.}}
\author[2]{Yun Gao}

\affil[1]{School of Mathematical Sciences, University of Science and Technology of China, Hefei, China 230026}
\affil[2] {Department of Mathematics and Statistics, York University, Toronto, M3J 1P3, Canada}

\maketitle

\begin{abstract}
In this paper, we used the free fields of Wakimoto to construct a class of irreducible representations
for the general linear Lie superalgebra $\mathfrak{gl}_{m|n}(\bbC)$.
The structures of the representations over the general linear Lie superalgebra and
the special linear Lie superalgebra are studied in this paper.
Then we extend the construction to the affine Kac-Moody Lie superalgebra
$\widehat{\mathfrak{gl}_{m|n}}(\bbC)$ on the tensor product of
a polynomial algebra and an exterior algebra with infinitely many variables
involving one parameter $\mu$, and we also obtain the necessary and sufficient
condition for the representations to be irreducible.
In fact, the representation is irreducible if and only if the parameter $\mu$ is nonzero.

\vspace{0.2cm} \noindent{\textbf{Keywords}}: affine Lie superalgebra; irreducible representation; free fields of Wakimoto; polynomial algebra; exterior algebra.

\vspace{0.2cm} \noindent{\textbf{Mathematics Subject Classification 2010}}: 17B10, 17B67.
\end{abstract}

\section{Introduction}
The theory of Lie superalgebras and their representations plays a fundamental role
in the understanding and exploitation of supersymmetry in physical systems.
The classification of the simple complex finite-dimensional Lie superalgebras was completed by Kac in \cite{K}.
Since then, these superalgebras, particularly of type $A(m,n)$,
have found applications in various areas including quantum mechanics, nuclear physics, particle physics, and string theory.
Understanding their module theory, however, has been a very difficult problem,
even at the level of the finite-dimensional simple modules for the Lie superalgebras of type $A(m,n)$.
As for the structure and representations of infinite-dimensional Lie superalgebras, these are understood only in particular cases.

People have constructed a great number of various irreducible representations for affine Lie superalgebras from different contexts.
It is certainly not possible to classify all irreducible modules for affine Lie superalgebras
but people are able to classify irreducible modules which have certain  good and natural properties.
For example, Rao and Zhao and others (see \cite{RZ} and references therein) classified
all irreducible integrable modules with finite-dimensional weight spaces for the  affine superalgebras
which are not type $A(m,n)$ or type $C(m),$
who proved that such modules comprise of irreducible integrable highest weight modules,
irreducible integrable lowest weight modules and evaluation modules.
Recently, Wu and Zhang (see \cite{WZ}) classified all irreducible integrable modules
for affine Lie superalgebras of type $A(m,n)$ and type $C(m)$, more precisely,
there is a new class of integrable modules for affine Lie superalgebras of type $A(m,n)$ and type $C(m)$
which must be of highest weight type, but are not necessarily evaluation modules.
Serganova in \cite{S} studied the affine Kac-Moody Lie superalgebras $\mathfrak{sl}_{1|n}^{(1)}$ and $\mathfrak{osp}_{2|2n}^{(1)}$
and their representations, and obtained character formula for the integrable highest weight representations.

In this paper, we use Wakimoto's idea of free fields to construct a family of
irreducible modules for affine Lie superalgebra $\widehat{\mathfrak{gl}_{m|n}}(\bbC)$ with infinite-dimensional weight spaces
and discuss the irreducibility of the modules in the finite and affine cases.
In fact, since the construction given here is different from Wakimoto's original constructions,
some researchers called this as Wakimoto-like construction \cite{GZ2}.
Our construction is consistent with \cite{GZ2} when we restrict $n=0$.
Moreover, The result is well known when we restrict our construction to $m=0$,
so, in this paper we always assume that $m \geq 1$.

The organization of this paper is as follows.
In Section 2 we will first construct a family of representations of the Lie superalgebras
$\mathfrak{gl}_{m|n}(\bbC)$ and $\mathfrak{sl}_{m|n}(\bbC)$,
and study the irreducibility.
Then we extend the construction to the affine Kac-Moody Lie superalgebra $\widehat{\mathfrak{gl}_{m|n}}(\bbC)$ in Section 3.
We end the paper by discussing the irreducibility of the representations.

Throughout this paper, we denote by $\bbZ$, $\bbZ_+$ and $\bbC$ the
sets of integers, nonnegative integers and complex numbers respectively.
All vector spaces and Lie (super)algebras are over $\bbC$.
For $1 \leq i \leq m+n$, the parity of $i$ is given by $|i|=0$ if $1 \leq i \leq m$, and $|i|=1$ otherwise.


\section{Representations of general linear Lie superalgebra $\mathfrak{gl}_{m|n}(\bbC)$}

Let $W_1$ be the free algebra over $\bbC$ generated by $x_i,\,y_k$,
where $i = 2,\dots, m \text{ and } k=1,\dots, n$, and $W_2$ be the two-sided ideal of $W_1$
generated by $x_i x_j-x_jx_i,\ x_iy_k-y_kx_i,\  y_k y_l+y_l y_k$, where $i,\,j=2,\dots,m\  \text{ and }\, \,k,\,l=1,\dots, n$.
Set
$$ W=W_1/{W_2}.$$

If we denote $\bbC[x_i|i=2,\ldots,m]$ to be the polynomial algebra generated by finite variables
and denote $\bigwedge^*(\bigoplus\limits_{j=1}^n\bbC y_j)$ to be the exterior algebra generated by finite variables.
Then as vector spaces we have
$$W\cong\bbC[x_i|i=2,\ldots,m]\otimes\bigwedge\nolimits^*\big(\bbC y_1 \oplus \cdots \oplus \bbC y_n\big).$$
Now we can get
\begin{align*}
&[x_i,x_j]=[x_i,y_k]=\{y_k,y_l\}_+=\{\frac{\partial}{\partial y_k},\frac{\partial}{\partial y_l}\}_+=0,\\
&[\frac{\partial}{\partial x_i},\frac{\partial}{\partial x_j}]
 =[\frac{\partial}{\partial x_i},\frac{\partial}{\partial y_k}]
 =[\frac{\partial}{\partial x_i},y_k]=[\frac{\partial}{\partial y_k},x_i]=0,\\
&[\frac{\partial}{\partial x_i},x_j]=\delta_{i,j},\{y_k,\frac{\partial}{\partial y_l}\}_+=\delta_{k,l},
\end{align*}
where $i,\,j=2,\dots m; \ k,\,l= 1,\dots, n,$
and $\frac{\partial}{\partial x_i}$,\  $\frac{\partial}{\partial y_k}$ are the partial differential operators on $W$.
Then we are ready to define the following operators on $W$:
\begin{align}
\begin{cases}
e_{1,1} \ = \  \mu-\sum\limits_{s=2}^{m} x_s \p{x_s}-\sum\limits_{t=1}^{n} y_t\p{y_t},\\
e_{i,1}=  x_i, \\
e_{m+k,1}=  y_k, \\
e_{1,i}= e_{1,1}\p{x_i}, \\
e_{1,m+k}= e_{1,1}\p{y_k}, \\
e_{i,j} = x_i\p{x_j}, \\
e_{i,m+k} =  x_i\p{y_k}, \\
e_{m+k,i} =  y_k\p{x_i}, \\
e_{m+k,m+l}=y_k\p{y_l},
\end{cases}
\end{align}
where $i,\,j=2\ldots,m$ and $k,\,l=1,\ldots,n$.
Note that $e_{1,1}+e_{2,2}+\cdot\cdot\cdot+e_{m+n,m+n}=\mu$.

We may view $W$ as a superalgebra with $|x_i| = \bar{0}$ for $i = 2,\ldots,m$ and $|y_k|= \bar{1}$ for $k = 1,\ldots,n$.
It follows that $\mathfrak{gl}(W)$ is a Lie superalgebra.
Then we have the following theorem (easily verified, but it could be viewed as a special case of Theorem 3.1,
which we will prove in Section 3):	

\begin{thm} \label{mainrep}
The linear map
\begin{displaymath}
\varphi: \mathfrak{gl}_{m|n}(\bbC) \longrightarrow \mathfrak{gl}(W)
\end{displaymath}
given by
\begin{equation*}
\varphi\big( E_{i,j} \big) \ = \ e_{i,j}, \text{ for } \, i,j = 1,\ldots, m+n,
\end{equation*}
is a Lie superalgebra homomorphism. That is, $W$ is a $\mathfrak{gl}_{m|n}(\bbC)$-module.
\end{thm}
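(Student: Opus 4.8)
The plan is to verify directly that $\varphi$ respects the defining (super)bracket relations of $\mathfrak{gl}_{m|n}(\bbC)$, i.e. that
\[
\varphi\big([E_{i,j},E_{k,l}]\big) \;=\; \big[\varphi(E_{i,j}),\varphi(E_{k,l})\big]
\;=\; \delta_{j,k}\,e_{i,l} - (-1)^{(|i|+|j|)(|k|+|l|)}\,\delta_{l,i}\,e_{k,j}
\]
for all indices $i,j,k,l\in\{1,\dots,m+n\}$, where the bracket on the right is the supercommutator in $\mathfrak{gl}(W)$ and $|E_{i,j}|=|i|+|j|$. Since the $E_{i,j}$ span $\mathfrak{gl}_{m|n}(\bbC)$ and $\varphi$ is linear by construction, this is exactly what is needed to conclude that $\varphi$ is a homomorphism of Lie superalgebras, hence that $W$ is a $\mathfrak{gl}_{m|n}(\bbC)$-module. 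The whole computation rests on the canonical (super)commutation relations listed in the excerpt, namely $[\p{x_i},x_j]=\delta_{i,j}$, $\{y_k,\p{y_l}\}_+ = \delta_{k,l}$, and the vanishing of all other brackets among the generators $x_i,y_k,\p{x_i},\p{y_k}$.

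The bookkeeping is cleanest if I organize the index pairs into blocks according to which of the three ``types'' of indices $1$, $\{2,\dots,m\}$ (even, ``$x$-type''), and $\{m+1,\dots,m+n\}$ (odd, ``$y$-type'') the labels $i,j,k,l$ fall into. First I would handle the cases where none of the four indices equals $1$: here every $e_{i,j}$ is of the form $z_a\p{z_b}$ with $z\in\{x,y\}$, and the relation $[z_a\p{z_b}, z_c\p{z_d}]_\pm = \delta_{b,c}\,z_a\p{z_d} - (-1)^{\cdots}\delta_{d,a}\,z_c\p{z_b}$ is the standard ``matrix-unit'' calculation for the Weyl/Clifford superalgebra; the sign $(-1)^{(|a|+|b|)(|c|+|d|)}$ appears precisely when one commutes an odd $y$ past an odd $\p{y}$, and it matches the structure constant of $\mathfrak{gl}_{m|n}$. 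Next I would treat the cases where exactly one of the indices is $1$, which split into the ``$e_{i,1}$'' cases ($e_{i,1}=x_i$ or $y_k$, a degree-raising operator) and the ``$e_{1,j}$'' cases ($e_{1,j}=e_{1,1}\p{x_j}$ or $e_{1,1}\p{y_j}$): here one must also use that $e_{1,1}=\mu-\sum_s x_s\p{x_s}-\sum_t y_t\p{y_t}$ commutes with every $x_i,y_k$ up to the obvious correction (e.g. $[e_{1,1},x_i]=-x_i$, $[e_{1,1},y_k]=-y_k$), which is itself a short Weyl-algebra computation.

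The only genuinely delicate cases are those involving $e_{1,1}$ directly, especially $[e_{1,i},e_{j,1}]$ and $[e_{1,m+k},e_{m+l,1}]$, where on one side one expects $\delta_{i,j}e_{1,1}-e_{j,i}$ (resp. its super-analogue) and on the other side one gets $e_{1,1}\p{x_i}\cdot x_j - x_j\cdot e_{1,1}\p{x_i}$; here the point is that $e_{1,1}$ does not commute with $\p{x_i}$ and $x_j$, so one must expand carefully and use $[e_{1,1},\p{x_i}]=\p{x_i}$ together with $[\p{x_i},x_j]=\delta_{i,j}$ to see the spurious terms cancel and recover $e_{1,1}\delta_{i,j} - x_j\p{x_i}$. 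The analogous $y$-computation additionally requires keeping track of the Koszul signs coming from $\{y_k,\p{y_l}\}_+=\delta_{k,l}$ and from moving odd elements past the odd summands $y_t\p{y_t}$ inside $e_{1,1}$. I expect this family of cases — reconciling the two different ``normal-ordered'' expressions for brackets that touch the first row and column — to be the main obstacle; everything else is routine. Finally, as a consistency check one notes that $\varphi(E_{1,1}+\cdots+E_{m+n,m+n})=\mu$ is central in $\mathfrak{gl}(W)$, consistent with the fact that this sum is central in $\mathfrak{gl}_{m|n}(\bbC)$ only after projecting, which is harmless here. As the excerpt remarks, the whole statement is in any case the $n_1=n_2=\cdots=0$ (non-affine) specialization of Theorem 3.1, so one may alternatively defer the verification to that more general computation.
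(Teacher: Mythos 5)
Your proposal is correct and matches the paper's treatment: the paper states Theorem 2.1 is ``easily verified'' as a direct case-by-case check of the supercommutator relations among the operators $e_{i,j}$ (carried out in full generality in the affine setting of Theorem 3.1, of which this is the $t^0$-specialization), which is exactly the verification you outline, including the delicate cases involving $e_{1,1}$. Your closing remark about deferring to Theorem 3.1 is precisely the route the paper itself takes.
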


Let $L=\mathfrak{sl}_{m|n}(\bbC)=[\mathfrak{gl}_{m|n}(\bbC),\ \mathfrak{gl}_{m|n}(\bbC)]$
and $\mathfrak{h}$ be the Cartan subalgebra of $\mathfrak{gl}_{m|n}(\bbC)$ which only contains
those diagonal matrices, then $\mathfrak{h}'=\mathfrak{h}\cap L$ is the Cartan subalgebra of $L$.
We set
\begin{align*}
\begin{cases}
&h_i=E_{i,i}-E_{i+1,i+1},\text{for}\  1\leq i\leq m-1,\\
&h_m=E_{m,m}+E_{m+1,m+1},\\
&h_j'=E_{m+j,m+j}-E_{m+j+1,m+j+1},\,\text{for}\  1\leq j\leq n-1,
\end{cases}
\end{align*}
to represent the standard basis of the Cartan subalgebra $\mathfrak{h}'$ of $\mathfrak{sl}_{m|n}(\bbC)$,
and denote by $\epsilon_i$ the basis of $\mathfrak{h}^*$ dual to $E_{i,i}$, where $1 \leq i\leq m+n$.
We can identify $\epsilon_i$ with $(-1)^{|i|}(E_{i,i},\cdot)$ with respect to
the even non-degenerated supersymmetric invariant bilinear form  $( \; , \; )$
given by $(a,b) = \mathrm{str}(ab)$ where $\mathrm{str}$ is the supertrace on $\mathfrak{gl}_{m|n}(\bbC)$.
Thus, any $\lambda\in \mathfrak{h}^*$ could be described in terms of its coordinates
as
$$\lambda=\sum_{i=1}^{m}\lambda_i\epsilon_i+\sum_{j=1}^{n}\lambda_{m+j}\epsilon_{m+j}=
(\lambda_1,...,\lambda_m|\lambda_{m+1},...,\lambda_{m+n})$$

Let us choose the standard Borel subalgebra $\mathfrak{b}\subseteq \mathfrak{gl}_{m|n}(\bbC)$ consisting of upper triangular matrices
and take a set of simple roots of $\mathfrak{gl}_{m|n}(\bbC)$ (see \cite[Chapter 1]{CW})
$$
\alpha_1=\epsilon_1-\epsilon_2,\alpha_2=\epsilon_2-\epsilon_3,\ldots, \alpha_{m+n-1}=\epsilon_{m+n-1}-\epsilon_{m+n},
$$
then the set of positive even roots and the set of positive odd roots are given by
$$
\Delta_0^+=\{\epsilon_i-\epsilon_j,\,\epsilon_{m+k}-\epsilon_{m+l}\ |1\leq i< j\leq m,\, 1\leq k<l\leq n\},
$$
$$
\Delta_1^+=\{\epsilon_i-\epsilon_{m+j}|1\leq i \leq m,\, 1\leq j\leq n\}.
$$

For $(\underline{a};\underline{b})=(a_2,\ldots,a_m;b_1,\ldots,b_n)\in\bbZ_+^{m-1}\times\{0,1\}^{n}$,
we denote $v(\underline{a};\underline{b})=x_2^{a_2}x_3^{a_3}\dots x_{m}^{a_m}y_1^{b_1}\dots y_n^{b_n}$,
and define the degree of $v(\underline{a};\underline{b})$ to be integer
$m_{\underline{a};\underline{b}}=\sum\limits_{k=2}^{m}a_k+\sum\limits_{k=1}^{n}b_k,$ i.e., $\mathrm{deg}(v(\underline{a};\underline{b}))=m_{\underline{a};\underline{b}}.$
So the weight of $v(\underline{a};\underline{b})$ is
$$(\mu-m_{\underline{a};\underline{b}},a_2,\dots,a_m|b_{1},\dots, b_{n}).$$

We have the following lemma.

\begin{lemma} \label{mneqn}
$\mathfrak{h}'$ acts diagonally on $W$ (so $W$ is a weight module over $\mathfrak{sl}_{m|n}(\bbC)$). If $m\ne n$ and $(\underline{a};\underline{b})\ne (\underline{c};\underline{d})$, then $v(\underline{a};\underline{b})$ and $v(\underline{c};\underline{d})$ belong to different weight spaces.
\end{lemma}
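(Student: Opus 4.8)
The plan is to work directly with the monomial basis $\{v(\underline{a};\underline{b}) : (\underline{a};\underline{b})\in\bbZ_+^{m-1}\times\{0,1\}^n\}$ of $W$ coming from the vector space identification $W\cong\bbC[x_i|i=2,\ldots,m]\otimes\bigwedge^*(\bbC y_1\oplus\cdots\oplus\bbC y_n)$. First I would record the action of the diagonal matrices on such a monomial: by the defining formulas, $\varphi(E_{1,1})=e_{1,1}=\mu-\sum_s x_s\p{x_s}-\sum_t y_t\p{y_t}$, $\varphi(E_{i,i})=x_i\p{x_i}$ for $2\le i\le m$, and $\varphi(E_{m+k,m+k})=y_k\p{y_k}$ for $1\le k\le n$; each of these operators is diagonal on the monomial basis. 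Hence $\mathfrak{h}$, and a fortiori $\mathfrak{h}'=\mathfrak{h}\cap L$, acts diagonally on $W$, so $W=\bigoplus_\lambda W_\lambda$ is a weight module and $v(\underline{a};\underline{b})$ spans a weight line whose $\mathfrak{h}^*$-coordinates are $(\mu-M,a_2,\dots,a_m\mid b_1,\dots,b_n)$, where $M=m_{\underline{a};\underline{b}}=\sum_k a_k+\sum_k b_k$, exactly as stated before the lemma.

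Next I would pass from $\mathfrak{h}^*$-coordinates to the restriction to $\mathfrak{h}'$, i.e. evaluate this weight on the basis $h_1,\dots,h_{m-1},h_m,h_1',\dots,h_{n-1}'$. Using the displayed formulas for these elements one obtains $h_1\mapsto(\mu-M)-a_2$, $h_i\mapsto a_i-a_{i+1}$ for $2\le i\le m-1$, $h_m\mapsto a_m+b_1$, and $h_j'\mapsto b_j-b_{j+1}$ for $1\le j\le n-1$. This reduces the lemma to a purely arithmetic comparison.

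Now suppose $v(\underline{a};\underline{b})$ and $v(\underline{c};\underline{d})$ lie in the same weight space and put $M'=m_{\underline{c};\underline{d}}$. Comparing the values on $h_2,\dots,h_{m-1}$ shows that $a_k-c_k$ is independent of $k$ for $2\le k\le m$; call it $p$. Comparing the values on $h_1',\dots,h_{n-1}'$ shows that $b_k-d_k$ is independent of $k$ for $1\le k\le n$; call it $q$. The value on $h_m$ then gives $p+q=0$. Finally, the value on $h_1$ gives $M'-M=a_2-c_2=p$, while a direct count gives $M-M'=\sum_{k=2}^m(a_k-c_k)+\sum_{k=1}^n(b_k-d_k)=(m-1)p+nq$; adding these two relations yields $mp+nq=0$. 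Combined with $q=-p$ this forces $(m-n)p=0$, hence $p=q=0$ since $m\ne n$, i.e. $(\underline{a};\underline{b})=(\underline{c};\underline{d})$, which is the contrapositive of the assertion.

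The computation is entirely elementary, so I do not expect a genuine obstacle; the only place where something substantive occurs is the last paragraph, where one must use both the $h_m$-relation and the $h_1$-relation simultaneously — the latter encodes the $e_{1,1}$-eigenvalue, which is precisely where $\mu$ and the total degree $M$ enter. It is the interplay of these two relations that produces the factor $m-n$, so the hypothesis $m\ne n$ is used in an essential way; for $m=n$ the system $p+q=0$, $mp+nq=0$ has nontrivial solutions and distinct monomials can indeed share a weight.
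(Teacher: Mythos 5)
Your proof is correct and follows essentially the same route as the paper: diagonalize $\mathfrak{h}'$ on the monomial basis, read off the eigenvalues, and show the resulting linear conditions force $(\underline{a};\underline{b})=(\underline{c};\underline{d})$ when $m\ne n$. The only real difference is the choice of basis of $\mathfrak{h}'$: you evaluate weights on the simple coroots $h_1,\dots,h_m,h'_1,\dots,h'_{n-1}$ and run the telescoping $p,q$ argument, whereas the paper uses $\{E_{1,1}-E_{i,i}\mid 2\le i\le m\}\cup\{E_{1,1}+E_{m+k,m+k}\mid 1\le k\le n\}$, on which every eigenvalue has the form $\mu-m_{\underline{a};\underline{b}}-a_i$ or $\mu-m_{\underline{a};\underline{b}}+b_k$; summing those relations yields $(n-m)m_{\underline{a};\underline{b}}=(n-m)m_{\underline{c};\underline{d}}$ directly, which is the same $(m-n)$ nondegeneracy factor you extract from $p+q=0$ and $mp+nq=0$. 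One caveat: your explicit evaluations $h_1\mapsto(\mu-M)-a_2$ and $h_m\mapsto a_m+b_1$ presuppose $m\ge2$. The case $m=1$, $n\ne1$ is also covered by the lemma and is used later (e.g.\ in the proof of Theorem \ref{finite}(2)); there the list of coroots degenerates to $E_{1,1}+E_{2,2},h'_1,\dots,h'_{n-1}$, with $E_{1,1}+E_{2,2}$ acting by $(\mu-M)+b_1$, and the same bookkeeping gives $M-M'=q$ and $M-M'=nq$, hence $(n-1)q=0$ and $q=0$. So your argument adapts with a one-line adjustment, while the paper's choice of basis handles $m=1$ uniformly because its first family of basis vectors is simply empty in that case.
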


\begin{proof} Choose $\{E_{1,1}-E_{i,i} |\, i=2,\dots, m \} \cup \{E_{1,1}+E_{m+k,m+k}|\, k=1,\dots, n \}$
as a basis of $\mathfrak{h}'$. Then
\begin{align*}
(e_{1,1}-e_{i,i}). v(\underline{a};\underline{b})=(\mu-a_i-m_{\underline{a};\underline{b}})v(\underline{a};\underline{b})
\end{align*}
and
\begin{align*}
(e_{1,1}+e_{m+k,m+k}). v(\underline{a};\underline{b})=
(\mu+b_k-m_{\underline{a};\underline{b}})v(\underline{a};\underline{b}).
\end{align*}
If $(\underline{a};\underline{b}), (\underline{c};\underline{d}) \in \bbZ_+^{m-1}\times\{0,1\}^n$
and $v(\underline{a};\underline{b})$, $v(\underline{c};\underline{d})$ lie in the same weight space,
then we have
\begin{equation} \label{relat1}
\mu -\sum\limits_{s=2}^{m} a_s-\sum\limits_{t=1}^{n} b_t -a_i = \mu -\sum\limits_{s=2}^{m} c_s-\sum\limits_{t=1}^{n} d_t -c_i,
\end{equation}
\begin{equation} \label{relat2}
\mu -\sum\limits_{s=2}^{m} a_s-\sum\limits_{t=1}^{n} b_t +b_k = \mu -\sum\limits_{s=2}^{m} c_s-\sum\limits_{t=1}^{n} d_t +d_k,
\end{equation}
for $i=2,3,\cdots,m$ and $k=1,\ldots,n$,
which imply
\begin{align*}
-m\sum_{s=2}^{m} a_s-(m-1)\sum_{t=1}^{n} b_t &= -m\sum\limits_{s=2}^{m} c_s-(m-1)\sum\limits_{t=1}^{n} d_t,\\
-n\sum_{s=2}^{m} a_s-(n-1)\sum_{t=1}^{n} b_t &= -n\sum\limits_{s=2}^{m} c_s-(n-1)\sum\limits_{t=1}^{n} d_t.
\end{align*}
Thus
\begin{align*}
(n-m)m_{\underline{a};\underline{b}}=(n-m)m_{\underline{c};\underline{d}}.
\end{align*}
If $m\neq n$, we have $m_{\underline{a};\underline{b}}=m_{\underline{c};\underline{d}}$. By relations \eqref{relat1} and \eqref{relat2},
we have $(\underline{a};\underline{b}) = (\underline{c};\underline{d})$.
\end{proof}

\begin{lemma} \label{meqn}
\begin{enumerate}
\item
If $m=n=1$, then $W=\bigwedge\bbC y_1$ is the two dimensional weight $\mathfrak{sl}_{1|1}(\bbC)$-module.
Moreover, if $\mu\ne0$, then $W$ is irreducible, or otherwise $\bbC y_1$ is a submodule of $W$.

\item
For $m=n\geq2$, $x_2^{a_2+1}x_3^{a_3+1}\ldots x_m^{a_m+1}$ and
$x_2^{a_2}x_3^{a_3}\ldots x_m^{a_m}y_1y_2\ldots y_m$ have the same weight for $a_i \in \bbZ_+$ with $2 \leq i \leq m$.
moreover, two monic monomials $f_1,\,f_2 \in W$
lie in the same weight space if and only if
$\{f_1,\,f_2\}=\{x_2^{a_2+1}x_3^{a_3+1}\ldots x_m^{a_m+1},\,x_2^{a_2}x_3^{a_3}\ldots x_m^{a_m}y_1y_2\ldots y_m\}$
for some $a_i \in \bbZ_+$, $2 \leq i \leq m$.
\end{enumerate}
\end{lemma}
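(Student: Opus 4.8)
The plan is to prove both parts by direct weight computations, using the eigenvalue formulas already obtained in the proof of Lemma~\ref{mneqn}. For part (1), when $m=n=1$ there are no variables $x_i$, so $W=\bigwedge^{*}(\bbC y_1)=\bbC\cdot 1\oplus\bbC y_1$ is two-dimensional, and $\mathfrak{sl}_{1|1}(\bbC)$ is spanned by the odd elements $E_{1,2},E_{2,1}$ together with the central even element $c=E_{1,1}+E_{2,2}$. First I would record the action on the basis $\{1,y_1\}$: since $e_{1,1}+e_{2,2}=\mu$ we have $\varphi(c)=\mu\cdot\mathrm{id}$; $\varphi(E_{2,1})=e_{2,1}$ is left multiplication by $y_1$, so $1\mapsto y_1$ and $y_1\mapsto y_1^2=0$; and $\varphi(E_{1,2})=e_{1,1}\p{y_1}$ sends $1\mapsto 0$ and $y_1\mapsto e_{1,1}(1)=\mu$. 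A proper nonzero submodule must be one-dimensional, say $\bbC v$ with $v=\alpha\cdot 1+\beta y_1$; stability under $E_{2,1}$ (whose value $\alpha y_1$ lies in $\bbC v$ only if $\alpha=0$) forces $v\in\bbC y_1$, and then stability under $E_{1,2}$, whose value on $y_1$ is $\mu$, requires $\mu=0$. Hence $W$ is irreducible when $\mu\neq 0$, and $\bbC y_1$ is a submodule when $\mu=0$.

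For the first assertion of part (2), with $m=n\geq2$, I would verify the displayed weight coincidence directly. By the two eigenvalue formulas in the proof of Lemma~\ref{mneqn}, the $\mathfrak{h}'$-weight of a monomial $v(\underline{a};\underline{b})$ is completely recorded by the scalars $\mu-a_i-m_{\underline{a};\underline{b}}$ for $i=2,\dots,m$ and $\mu+b_k-m_{\underline{a};\underline{b}}$ for $k=1,\dots,n$. Substituting $f_1=x_2^{a_2+1}\cdots x_m^{a_m+1}$ (all $b_k=0$, degree $\sum_{s}a_s+m-1$) and $f_2=x_2^{a_2}\cdots x_m^{a_m}y_1\cdots y_m$ (all $b_k=1$, degree $\sum_s a_s+m$) and using $n=m$, the first batch of scalars becomes $\mu-a_i-\sum_s a_s-m$ for both monomials and the second becomes $\mu-\sum_s a_s-m+1$ for both; hence $f_1$ and $f_2$ share a weight.

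For the ``moreover'' part, suppose $v(\underline{a};\underline{b})$ and $v(\underline{c};\underline{d})$ lie in the same weight space. The key point is that for $m=n$ the equations \eqref{relat1}--\eqref{relat2} no longer force $(\underline{a};\underline{b})=(\underline{c};\underline{d})$ the way they did in Lemma~\ref{mneqn}: instead \eqref{relat1} gives $a_i-c_i=m_{\underline{c};\underline{d}}-m_{\underline{a};\underline{b}}$ for every $i$, so this is a constant $\delta$ independent of $i$, and \eqref{relat2} gives $d_k-b_k=m_{\underline{c};\underline{d}}-m_{\underline{a};\underline{b}}=\delta$ for every $k$ --- the very same constant. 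Since all $b_k,d_k\in\{0,1\}$, this forces $\delta\in\{-1,0,1\}$. The value $\delta=0$ returns $(\underline{a};\underline{b})=(\underline{c};\underline{d})$; the value $\delta=1$ forces $d_k=1$ and $b_k=0$ for all $k$ and $a_i=c_i+1$ for all $i$, so that $\{v(\underline{a};\underline{b}),v(\underline{c};\underline{d})\}=\{x_2^{c_2+1}\cdots x_m^{c_m+1},\,x_2^{c_2}\cdots x_m^{c_m}y_1\cdots y_m\}$; and $\delta=-1$ gives the same pair with the monomials interchanged. The first assertion shows conversely that such a pair does share a weight, so the description is exact. I do not expect a genuine obstacle: the only delicate point is the bookkeeping that makes the two constants coming from \eqref{relat1} and \eqref{relat2} literally equal --- both equal to $m_{\underline{c};\underline{d}}-m_{\underline{a};\underline{b}}$ --- which is precisely what collapses the a priori large weight-coincidence system to a single extra monomial in each weight space.
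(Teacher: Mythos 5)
Your proposal is correct, and it is complete on both points where the paper is terse. For part (1) the paper simply declares the claim obvious; your explicit description of the $\mathfrak{sl}_{1|1}$-action on the basis $\{1,y_1\}$ and the check that a one-dimensional invariant line must be $\bbC y_1$, which is $E_{1,2}$-stable exactly when $\mu=0$, fills that in correctly. For part (2) your route differs from the paper's: the paper partitions the monic monomials into $2^n$ classes according to which $y_k$ occur, writes out the resulting weights explicitly only for $m=n=2$, and then asserts that "the same arguments" handle general $m=n$; you instead reuse the eigenvalue relations \eqref{relat1} and \eqref{relat2} from the proof of Lemma~\ref{mneqn} and observe that a weight coincidence forces $a_i-c_i$ and $d_k-b_k$ to equal one common constant $\delta=m_{\underline{c};\underline{d}}-m_{\underline{a};\underline{b}}$, which the constraint $b_k,d_k\in\{0,1\}$ pins to $\{-1,0,1\}$, yielding exactly the asserted pair (or equality) uniformly for all $m=n\geq 2$. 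What your approach buys is a single clean argument that replaces the case enumeration and actually supplies the general-case details the paper omits; what the paper's class-by-class computation buys is a concrete list of weights in the small case, which makes the coincidence visible at a glance but scales less transparently. Your verification of the "if" direction by substituting both monomials into the two eigenvalue formulas is also correct (both batches of scalars reduce to $\mu-a_i-\sum_s a_s-m$ and $\mu-\sum_s a_s-m+1$), so no gap remains.
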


\begin{proof}
The first one is obvious.

If $m=n=2$, then $W=\bbC[x_2]\bigotimes\bigwedge^*(\bbC y_1\oplus\bbC y_2)$.
We can split monic monomials in $W$ into $4$ different classes with respect to $y_k$,
$$x_2^{a_2},\,\,\,x_2^{b_2}y_1,\,\,\,x_2^{c_2}y_2,\,\,\,x_2^{d_2}y_1y_2,$$
for nonnegative integers $a_2,\ b_2\ ,c_2,\ d_2$.
The weights of these monomials are
$$
(\mu-2a_2,a_2,0),\,(\mu-2b_2-1,b_2+1,1),\,(\mu-2c_2-1,c_2,-1),\,(\mu-2d_2-2,d_2+1,0).
$$
Since $f_1,\,f_2$ belongs to the same weight space, we know that the possible cases are
$f_1=x_2^{a_2+1}$ and $f_2=x_2^{a_2}y_1y_2$, or reverse.

For general case, we can split monic monomials in $W$ into $2^n$ different classes with respect to $y_k$,
and then using the same arguments as in the $m=n=2$ case, we can prove only
$x_2^{a_2+1}x_3^{a_3+1}\ldots x_m^{a_m+1}$ and $x_2^{a_2}x_3^{a_3}\ldots x_m^{a_m}y_1y_2\ldots y_m$
belong to the same weight space for arbitrary $a_2,\,a_3,\,\ldots,a_m \in \bbZ_+$.
\end{proof}

\begin{thm} \label{finite}
\begin{enumerate}
\item
If $m\geq2$, then $W$ is the highest weight module of $\mathfrak{sl}_{m|n}(\bbC)$,
with the highest weight vector 1, and the highest weight $\lambda$ is
$$
(\lambda(h_1),\cdots,\lambda(h_{m}),\lambda(h_1'),\cdots, \lambda(h_{n-1}'))=(\mu,0,\dots,0).
$$
Moreover, $W$ is an irreducible module if and only if $\mu$ is not a nonnegative integer.

\item
If $m=1$ and $n \geq 2$, then $W$ is the highest weight module of $\mathfrak{sl}_{1|n}(\bbC)$,
with the highest weight vector 1, and the highest weight $\lambda$ is
$$
(\lambda(h_1),\lambda(h'_1),\lambda(h'_2),\cdots,\lambda(h'_{n-1}))=(\mu,0,\cdots,0).
$$
Moreover, $W$ is an irreducible module if and only if $\mu \notin \{0,1,\cdots,n-1\}$.
\end{enumerate}
\end{thm}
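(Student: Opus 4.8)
The plan is to establish the two assertions of the theorem in turn, proving the two directions of the irreducibility criterion by different arguments.

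\emph{That $W$ is a highest weight module.} Using the explicit formulas for the operators $e_{i,j}$, one verifies directly that $1$ is a singular vector: for a positive root vector $E_{1,j}$ with $j\geq 2$ one has $\varphi(E_{1,j})=e_{1,1}\p{x_j}$ or $e_{1,1}\p{y_{j-m}}$, and the partial derivative annihilates $1$; for $E_{i,j}$ with $2\leq i<j$ the operator $\varphi(E_{i,j})$ again begins with a partial derivative killing $1$. Since $e_{1,1}\cdot 1=\mu$ and $e_{i,i}\cdot 1=0$ for $i\geq 2$, the weight of $1$ is $\mu\epsilon_1$, and evaluating on the standard basis of $\mathfrak{h}'$ yields the coordinates $(\mu,0,\dots,0)$ in both cases. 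Finally $W=U(\mathfrak{sl}_{m|n}(\bbC))\cdot 1$: the negative root vectors $\varphi(E_{i,1})=x_i$ and $\varphi(E_{m+k,1})=y_k$ act as multiplication operators, so applying them to $1$ in a fixed order produces $\pm v(\underline{a};\underline{b})$ for every $(\underline{a};\underline{b})$, and these monomials span $W$.

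\emph{Irreducibility when $\mu$ satisfies the hypothesis.} I would first record two facts. (i) Since $W$ is a highest weight module its weights are all $\leq\lambda$, and the weight space $W_\lambda$ equals $\bbC\cdot 1$: for $m\neq n$ this is Lemma \ref{mneqn}, while for $m=n\geq 2$ it follows from Lemma \ref{meqn}(2), because $1$ is not a member of any of the coincident pairs described there. (ii) With respect to the grading $W=\bigoplus_{d\geq 0}W_d$ by degree of monomials, $e_{1,1}$ acts on $W_d$ as the scalar $\mu-d$, and $e_{1,i}=e_{1,1}\p{x_i}$, $e_{1,m+k}=e_{1,1}\p{y_k}$. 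Now I claim that under the hypothesis on $\mu$ every singular vector of $W$ lies in $\bbC\cdot 1$: if $w=\sum_d w_d$ is singular, then comparing degree components in $\varphi(E_{1,i})w=0$ and $\varphi(E_{1,m+k})w=0$ forces $(\mu-(d-1))\p{x_i}w_d=0$ and $(\mu-(d-1))\p{y_k}w_d=0$ for all $d$; the hypothesis ($\mu\notin\bbZ_+$ when $m\geq 2$; $\mu\notin\{0,\dots,n-1\}$ when $m=1$, where the relevant degrees satisfy $1\leq d\leq n$) makes $\mu-(d-1)\neq 0$ for $d\geq 1$, so all first partials of $w_d$ vanish for $d\geq 1$, whence $w_d=0$ there and $w=w_0\in W_0=\bbC\cdot 1$. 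Given a nonzero submodule $N$ (necessarily a weight module, with weights $\leq\lambda$), choose a maximal weight $\nu$ of $N$ (take one of minimal height); by maximality every nonzero vector of $N_\nu$ is singular, hence lies in $\bbC\cdot 1$, which forces $\nu=\lambda$ and $1\in N$, so $N=W$. Thus $W$ is irreducible.

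\emph{Reducibility when $\mu$ fails the hypothesis.} If $\mu\in\bbZ_+$ (case $m\geq 2$) or $\mu\in\{0,\dots,n-1\}$ (case $m=1$), I would exhibit a singular vector of weight different from $\lambda$. Take $v=x_2^{\mu+1}$ when $m\geq 2$ and $v=y_1y_2\cdots y_{\mu+1}$ when $m=1$; in both cases $v$ is a nonzero monomial of degree $\mu+1$. A short check shows $v$ is singular: for the positive root vectors that factor as $e_{1,1}$ composed with a partial derivative, the derivative carries $v$ into $W_\mu$, on which $e_{1,1}$ acts as zero, while all remaining positive root vectors kill $v$ outright (for $v=y_1\cdots y_{\mu+1}$ one uses $y_k^2=0$ to see that the operators $y_k\p{y_l}$ with $k<l$ annihilate $v$). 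A direct computation shows the weight of $v$ equals $\lambda$ minus a nonzero nonnegative integer combination of simple roots, hence is strictly less than $\lambda$; therefore $U(\mathfrak{sl}_{m|n}(\bbC))\cdot v$ is a highest weight module of highest weight strictly below $\lambda$, so $\lambda$ is not one of its weights and it does not contain $1$. It is therefore a proper nonzero submodule, and $W$ is reducible.

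\emph{Where the difficulty lies.} The highest-weight verification and the singular-vector computations are routine. The delicate point is the classification of singular vectors in the case $m=n$: there the weight spaces of $W$ can be two-dimensional, so a singular vector need not be homogeneous of a single degree, and one is forced to argue one degree component at a time as above --- the fact that the specific weight space $W_\lambda$ is nevertheless one-dimensional is what makes the conclusion go through.
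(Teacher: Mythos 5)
Your proposal is correct, and both directions of the criterion are established, but the route is genuinely different from the paper's. The paper argues inside an arbitrary nonzero submodule $U$: using Lemma \ref{mneqn} (and, for $m=n$, the two-term analysis of Lemma \ref{meqn}) it extracts a monomial $v(\underline{a};\underline{b})$ from $U$, pushes it to $x_2^{s}$ by the operators $e_{2,j}$ (respectively applies $e_{1,i_k}\cdots e_{1,i_1}$ directly when $m=1$), and then climbs back to $1$ via $e_{1,2}^{s}$, the scalar $s!(\mu-(s-1))\cdots\mu$ being nonzero exactly under the stated hypothesis; the converse is only made explicit in the next theorem, where $x_2^{\mu+1}$ (resp.\ $y_1\cdots y_{\mu+1}$) generates the proper submodule $V(\mu)$. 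You instead classify singular vectors globally: since $e_{1,1}$ acts as $\mu-d$ on the degree-$d$ component and $e_{1,i}=e_{1,1}\p{x_i}$, $e_{1,m+k}=e_{1,1}\p{y_k}$, annihilation by these operators forces all partials of each positive-degree component to vanish when $\mu-(d-1)\neq 0$, so every singular vector lies in $\bbC\cdot 1$; a maximal-weight argument (legitimate here because all weights are $\leq\lambda$ and submodules are weight modules) then produces a singular vector in any nonzero submodule, and $W_\lambda=\bbC\cdot 1$ (Lemma \ref{mneqn} for $m\neq n$, Lemma \ref{meqn}(2) for $m=n$, since $1$ belongs to no coincident pair) finishes irreducibility. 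What your approach buys is uniformity -- the $m=n$ case needs no separate manipulation of two-dimensional weight spaces beyond the one-dimensionality of $W_\lambda$, and the roles of the hypotheses $\mu\notin\bbZ_+$ versus $\mu\notin\{0,\dots,n-1\}$ are transparent from the range of degrees -- together with an explicit proof of the reducibility direction via the singular vectors $x_2^{\mu+1}$ and $y_1\cdots y_{\mu+1}$, which the paper defers. What the paper's computation buys is the extra mileage used later: the explicit raising scheme through $x_2^{s}$ is reused verbatim in the proof that $V(\mu)$ is the unique maximal submodule and in the affine case, whereas your argument, while cleaner here, would have to be supplemented there.
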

\begin{proof}(1). It is obvious that $W$ is a highest weight module, with highest weight vector 1 and the highest weight $\lambda=(\mu,0,\cdots,0)$.

If $m\ne n$ and $U$ is a submodule of $W$, we may assume that $v(\underline{a};\underline{b})$ is
in $U$ with the degree $s=m_{\underline{a};\underline{b}}\geq 1$ since every monomial lies in different weight space.
Note that
\begin{align*}
x_2^{s}=\frac{1}{a_3!\cdots a_m!}e_{2,m+n}^{b_n}\cdots e_{2,m+1}^{b_1}e_{2,m}^{a_m}\cdots e_{2,3}^{a_3}v(\underline{a};\underline{b})
\end{align*}
hence $x_2^{s}$ is in $U.$ Since
$$
e_{1,2}(x_2^s)=[s\mu-s(s-1)]x_2^{s-1}=s(\mu-(s-1))x_2^{s-1},
$$
we have
$$
e_{1,2}^s(x_2^s)=s!(\mu-(s-1))\cdots(\mu-1)\mu \cdot 1.
$$
So, if $\mu$ is not a nonnegative integer, then $1\in U$. Hence $W$ is an irreducible module.

For $m=n\geq 2,$ by Lemma \ref{meqn}, we just consider nonzero weight vector
$$
v=A x_2^{a_2+1}x_3^{a_3+1}\ldots x_m^{a_m+1}+B x_2^{a_2}x_3^{a_3}\ldots x_m^{a_m}y_1y_2\ldots y_m\in U
$$
with $A,B \in \bbC$ and $a_i \in \bbZ_+$. If $B=0$, then $x_2^{a_2+1}x_3^{a_3+1}\ldots x_m^{a_m+1} \in U$,
otherwise
$$
e_{s,m+m}\ldots e_{s,m+2}e_{2,m+1}(B^{-1}v)=x_2^{a_2+m}x_3^{a_3}\ldots x_m^{a_m} \in U.
$$
By the same arguments as in the $m \ne n$ case, we know that
$W$ is an irreducible module if and only if $\mu$ is not a nonnegative integer.

(2). Since $(e_{1,1}+e_{2,2})1=\mu,\text{ and
}(e_{i,i}-e_{i+1,i+1})1=0,\,i=2,3,\cdots,n$, $W$ is a highest weight
module with highest weight vector
1 and the highest weight $\lambda=(\mu,0,\cdots,0)$.

If $U$ is a nonzero submodule of $W$, such that
$y_{i_1}y_{i_2}\cdots y_{i_k},1 \leq i_1 < i_2 < \cdots < i_k \leq
n$ is in $U$ (since every monomial lies in different weight space).
We have
\begin{align*}
  & e_{1,i_k}\cdots e_{1,i_2}e_{1,i_1}y_{i_1}y_{i_2}\cdots y_{i_k} \\
= & e_{1,i_k}\cdots e_{1,i_2}e_{1,1}y_{i_2}\cdots y_{i_k}   \\
= & (\mu -(k-1))e_{1,i_k}\cdots e_{1,i_2}y_{i_2}\cdots y_{i_k}  \\
= & \cdots \cdots \\
= & (\mu -(k-1))(\mu -(k-2)) \cdots (\mu -1) \mu \cdot 1.
\end{align*}
So, if $\mu \notin \{0,1,\cdots,n-1\}$, then 1 is in $U$, hence $W$
is an irreducible module.
\end{proof}

\begin{thm}
\begin{enumerate}
\item
If $m\geq2$ and $\mu$ is a nonnegative integer,
the submodule $V(\mu)$ of $W$ generated by $x_2^{\mu+1}$ is the unique maximal submodule of $W$.
Moreover $V(\mu)$ itself is irreducible.

\item
If $m=1$ and $\mu \in \{0,1,\cdots,n-1\}$,
the submodule $V(\mu)$ of $W$ generated by $y_1y_2 \cdots y_{\mu+1}$ is the unique maximal submodule of $W$,
also itself is irreducible.
\end{enumerate}
\end{thm}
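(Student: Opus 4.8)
In both parts write $W_s$ for the span of the monomials $v(\underline a;\underline b)$ of degree $s$, and put $g=x_2^{\mu+1}$ in part~(1) and $g=y_1\cdots y_{\mu+1}$ in part~(2). One checks that $g$ is a singular vector: the only positive root vectors not obviously annihilating $g$ (via $\partial_{x_j}g=0$, $\partial_{y_k}g=0$, or $y_k^2=0$) are $e_{1,2}$ (resp. the $e_{1,1+k}$), and these kill $g$ because, after the differentiation, $e_{1,1}$ acts on $W_\mu$ as multiplication by $\mu-\mu=0$. The plan is to prove that
\[
V(\mu)=\bigoplus_{s\ge\mu+1}W_s .
\]
Since $g$ is a highest weight vector, $V(\mu)=U(\mathfrak{n}^{-})g$; every negative root vector either preserves degree (the $e_{i,j}$, $e_{m+k,i}$, $e_{m+k,m+l}$) or raises it by one (the $e_{i,1}$, acting as multiplication by $x_i$ or $y_k$), which gives $V(\mu)\subseteq\bigoplus_{s\ge\mu+1}W_s$. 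For the reverse inclusion, the degree-preserving negative root vectors carry $g$ to every monomial of degree $\mu+1$, so $W_{\mu+1}\subseteq V(\mu)$, and the $e_{i,1}$ then fill in $W_{\mu+2},W_{\mu+3},\dots$. In particular $1\notin V(\mu)$, so $V(\mu)$ is a proper submodule.

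Next I would show that $W/V(\mu)$ is irreducible, i.e. that every submodule $U$ with $U\not\subseteq V(\mu)$ equals $W$; combined with the previous paragraph this forces $V(\mu)$ to be the unique maximal submodule. Pick a nonzero weight vector $v\in U$ having a nonzero component of some degree $s\le\mu$. If $m\ne n$ (in particular in part~(2)), weight spaces are one-dimensional by Lemma~\ref{mneqn}, so $v$ is a scalar multiple of a monomial of degree $s$. If $m=n$, Lemma~\ref{meqn} shows $v$ is either such a monomial or a genuine two-term weight vector, in which latter case a single well-chosen operator in $U(\mathfrak{sl}_{m|n})$ annihilates one summand and sends the other to a nonzero monomial of degree $\le\mu$ lying in $U$. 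Thus $U$ contains a monomial $w$ of degree $s\le\mu$, and the argument from the proof of Theorem~\ref{finite} applies unchanged: from $w$ one reaches $x_2^s$ (resp. $y_1\cdots y_s$) and then $s!\,\mu(\mu-1)\cdots(\mu-s+1)\cdot1$ (resp. $\mu(\mu-1)\cdots(\mu-s+1)\cdot1$), which is nonzero because $s\le\mu$; hence $1\in U$ and $U=W$.

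For the irreducibility of $V(\mu)$ itself, take a nonzero submodule $U'\subseteq V(\mu)$; by the same reduction it contains a monomial $w'$ of some degree $s\ge\mu+1$. Passing to $x_2^s$ (resp. $y_1\cdots y_s$), I then descend back to the generator by applying $e_{1,2}^{\,s-\mu-1}$ (resp. a suitable composite of the $e_{1,1+k}$, followed in part~(2) by even root vectors $e_{1+k,1+l}$, which can carry any degree-$(\mu+1)$ monomial to $y_1\cdots y_{\mu+1}$). The crucial point is that the descent terminates exactly at $g$ with a nonzero coefficient: the scalar $\prod_{t=\mu+2}^{s}t(\mu-t+1)$ picked up along the way has each factor $\mu-t+1\le-1$, whereas one further application of $e_{1,2}$ would introduce the factor $\mu-(\mu+1)+1=0$ --- which is precisely why $g$ was singular to begin with. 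Once $g\in U'$ we get $U'\supseteq U(\mathfrak{sl}_{m|n})g=V(\mu)$, so $U'=V(\mu)$.

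The only real obstacle I anticipate is the bookkeeping in the case $m=n$, where weight spaces can be two-dimensional (Lemma~\ref{meqn}): before the degree-shifting computations apply one must first extract a genuine monomial from a two-term weight vector $Av_1+Bv_2$ (with $v_1=x_2^{a_2+1}\cdots x_m^{a_m+1}$, $v_2=x_2^{a_2}\cdots x_m^{a_m}y_1\cdots y_m$ as in Lemma~\ref{meqn}), by applying a power $e_{2,j}^{\,a_j+1}$ that kills $v_2$, or $e_{2,m+1}$ that kills $v_1$, or a power of $e_{1,2}$ when $m=2$, the choice being dictated by which of the two available degrees the next step needs. Every other step is a direct adaptation of computations already carried out in the proof of Theorem~\ref{finite}.
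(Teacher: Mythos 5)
Your proof is correct, and its maximality half is essentially the paper's argument: extract from any submodule not inside $V(\mu)$ a monomial of degree $s\le\mu$ (via Lemmas \ref{mneqn}, \ref{meqn}), push it to $x_2^s$ (resp.\ $y_{i_1}\cdots y_{i_s}$), and climb to $1$ using the nonvanishing of $s!\,\mu(\mu-1)\cdots(\mu-s+1)$. Where you genuinely diverge is the irreducibility of $V(\mu)$ itself: the paper classifies singular vectors, observing that the only monic singular vector in $V(\mu)$ is $x_2^{\mu+1}$ (resp.\ $y_1\cdots y_{\mu+1}$) because $e_{1,2}x_2^s=s(\mu-s+1)x_2^{s-1}$ vanishes only at $s=\mu+1$, and then relies implicitly on the standard fact that a nonzero submodule of a highest weight module with finite-dimensional weight spaces contains a singular vector. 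You instead argue directly: any nonzero submodule contains a monomial of degree $s\ge\mu+1$, which you transport to $x_2^s$ (resp.\ $y_1\cdots y_s$) and then descend to the generator, checking that the accumulated scalar $\prod_{t=\mu+2}^{s}t(\mu-t+1)$ is nonzero. This makes the proof more self-contained (no appeal to existence of singular vectors in submodules) at the price of the $m=n$ bookkeeping you flag, and it also makes explicit two points the paper leaves terse: the graded description $V(\mu)=\bigoplus_{s\ge\mu+1}W_s$ (asserted ``evidently'' only after the theorem) and the uniqueness of the maximal submodule, which you obtain cleanly from ``$U\not\subseteq V(\mu)$ implies $U=W$'' rather than the paper's ``$U\supsetneq V(\mu)$ implies $U=W$'' plus a reference back to earlier proofs. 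One small caveat: your parenthetical that $m\ne n$ always holds in part (2) fails when $n=1$ (then $m=n=1$, $\mu=0$), a degenerate case the paper also glosses over; there $V(0)=\bbC y_1$ and the statement is immediate by inspection, so nothing breaks, but the one-dimensional weight space shortcut does not apply verbatim in that case.
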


\begin{proof}
(1).
If $U$ is a submodule of $W$ containing $V(\mu)$ as a proper submodule,
then $U$ must contain some $v(\underline{a};\underline{b})$ with degree $s:=m_{\underline{a};\underline{b}}\leq \mu$,
which follows from Lemma \ref{mneqn}, Lemma \ref{meqn} and the proof of Theorem \ref{finite}.
Hence
\begin{align*}
&x_2^{s}=\frac{1}{a_3!\cdots a_m! } e_{2,m+n}^{b_n}
\cdots e_{2,m+1}^{b_1}e_{2,m}^{a_m}\cdots e_{2,3}^{a_3}v(\underline{a};\underline{b}) \in U,
\end{align*}
furthermore
$$
1=\frac{1}{s!(\mu-(s-1))\cdots(\mu-1)\mu}e_{1,2}^s(x_2^s) \in U.
$$
So $U=W$.

It is obvious that the monic singular vector in $V(\mu)$ must have the form $x_2^s$ with $s \geq \mu+1$.
Now
$$
0=e_{1,2}(x_2^s)=[s\mu-s(s-1)]x_2^{s-1}=s(\mu-(s-1))x_2^{s-1}
$$
implies that $s=\mu+1$, i.e., $V(\mu)$ is an irreducible module.
Now the uniqueness of $V(\mu)$ follows from the above discussion and the proof of Theorem \ref{finite}.

(2). If $U$ is a submodule of $W$ containing $V(\mu)$ as
its submodule, then $U$ must contain $y_{i_1}y_{i_2}\cdots y_{i_s}$
with $0 \leq s \leq \mu$ and $1 \leq i_1 < \cdots i_s \leq n$.
Hence we have
$$
1=\frac{1}{(\mu-(s-1))(\mu-(s-2)) \cdots (\mu -1)\mu}e_{1,i_s}\cdots e_{1,i_1}y_{i_1}\cdots y_{i_s} \in U.
$$
So $U=W$, i.e., $V(\mu)$ is maximal.

Also if $W$ contains another monic singular vector different from $1$.
It is obvious that the singular vector must have the form $y_1y_2 \cdots y_s$ with $s \geq 1$.
Now $e_{1,2}y_1y_2 \cdots y_s=(\mu-(s-1))y_2 \cdots y_s = 0$ deduce that $s = \mu+1$.
Hence $V(\mu)$ is the unique maximal submodule of $W$, moreover itself is irreducible.
\end{proof}

Next let $S$ be the subalgebra of $\mathfrak{gl}(W)$ generated by elements $e_{i,j}$ with $i,\,j=2,\ldots,m+n$.
Then we know that $S \cong \mathfrak{gl}_{m-1|n}(\bbC)$ which follows from Theorem \ref{mainrep}.
Thus $\mathfrak{h}_S=\mathfrak{h}\cap S$ is the Cartan subalgebra of $S$,
and $\epsilon_i$, $2\leq i\leq m+n-1$ is a basis for $(\mathfrak{h}_S)^*$.

When $\mu$ is a nonnegative integer,
evidently $V(\mu)$ is the submodule of $W$ spanned by all the monomials $v(\underline{a};\underline{b})$,
with degree  $m_{\underline{a};\underline{b}}\geq \mu+1$.
Now the quotient module $W/V(\mu)$ is an irreducible highest weight module of $\mathfrak{sl}_{m|n}(\bbC)$,
with highest weight $(\mu,0,\cdots,0)$, hence it is finite-dimensional
(for more information about the finite-dimensional simple modules over $\mathfrak{sl}_{m|n}(\bbC)$, see \cite{CW}).
It is clear that
$$\{v(\underline{a};\underline{b})+V(\mu)\ |\ m_{\underline{a};\underline{b}}\leq\mu\}$$
is a basis for $W/V(\mu)$ and the weight of $v(\underline{a};\underline{b})+V(\mu)$ is
$$
\mu \epsilon_1-m_{\underline{a};\underline{b}}\alpha_1- (m_{\underline{a};\underline{b}}-a_2)\alpha_2-
(m_{\underline{a};\underline{b}}-a_2-a_3)\alpha_3-\ldots-b_n\alpha_{m+n-1},
$$
so the character formula for $M/V(\mu)$ is
$$
Ch_{W/V(\mu)}=\sum_{\stackrel{a_i \geq0,\, 0 \leq b_k\leq1}{m_{\underline{a};\underline{b}}\leq\mu}}
e(\mu \epsilon_1)e(-(m_{\underline{a};\underline{b}})\alpha_1-\ldots-b_n\alpha_{m+n-1}).
$$

For $s\in\bbZ_+$, set $W_s = \{f\in W | f \text{ is a homogenous elements of degree } s\}$, then we get
$$
W=\bigoplus\limits_{s=0}^{\infty}W_s.
$$
From the above discussion, we can obtain the following theorem:

\begin{thm}
\begin{enumerate}
\item
The character formula of $\mathfrak{gl}_{m|n}(\bbC)$-module $W$ is
$$
Ch(W)=\sum\limits_{\alpha\in P}e(\mu \epsilon_1)e(\alpha),
$$
where
$$
P=\Big\{\sum\limits_{i=1}^{m+n-1}-t_i\alpha_i \, \Big | \,t_1\geq t_2\geq \ldots\geq t_{m+n-1}\geq t_{m+n} = 0,
\,t_j-t_{j+1} \in \{0,1\} \text{ for } m\leq j\leq m+n-1\Big\}.
$$

\item
If $\mu$ is a nonnegative integer, the character formula of $V(\mu)$ is
$$
Ch\big(V(\mu)\big)=\sum\limits_{\alpha\in Q}e(\mu \epsilon_1)e(\alpha),
$$
where
$$
Q=\Big\{\sum\limits_{i=1}^{m+n-1}-t_i\alpha_i \in P\, \Big|\,t_1\geq\mu+1 \Big\}.
$$

\item
If $m\geq2$, then every homogeneous subspace $W_s$ of $W$ is an irreducible highest weight module of $S$,
with the highest weight vector $x_2^s$ of weight $s\epsilon_2$.

\item
If $m=1$, then every homogenous subspace $W_s$ (with $s \in \{0,1, \cdots, n\}$) of $W$ is an irreducible
highest weight module of $S$, with the highest weight vector is $y_1 y_2 \cdots y_{s}$ of weight $\epsilon_2+\epsilon_3+\cdots+\epsilon_{s+1}$.
\end{enumerate}
\end{thm}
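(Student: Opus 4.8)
The plan is to deduce all four parts from the explicit monomial basis $\{v(\underline{a};\underline{b}):(\underline{a};\underline{b})\in\bbZ_+^{m-1}\times\{0,1\}^n\}$ of $W$ together with the weight bookkeeping already carried out above. First I would record the dictionary between monomials and weights: the $\mathfrak{gl}_{m|n}(\bbC)$-weight of $v(\underline{a};\underline{b})$ is $(\mu-N)\epsilon_1+\sum_{i=2}^{m}a_i\epsilon_i+\sum_{j=1}^{n}b_j\epsilon_{m+j}$ with $N=m_{\underline{a};\underline{b}}$, and — by the computation already performed for $W/V(\mu)$, which uses nothing about $\mu$ — this equals $\mu\epsilon_1-\sum_{k=1}^{m+n-1}t_k\alpha_k$, where $t_1=N$, $t_k-t_{k+1}=a_{k+1}$ for $1\le k\le m-1$, $t_k-t_{k+1}=b_{k-m+1}$ for $m\le k\le m+n-1$, and $t_{m+n}:=0$.

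For part (1): since the $\alpha_k$ are linearly independent and $\epsilon_1\notin\mathrm{span}_{\bbC}\{\alpha_1,\dots,\alpha_{m+n-1}\}$, the terms $e(\mu\epsilon_1)e\big(-\sum_k t_k\alpha_k\big)$ are pairwise distinct, so computing $Ch(W)$ reduces to showing that $(\underline{a};\underline{b})\mapsto(t_1,\dots,t_{m+n-1})$ is a bijection from $\bbZ_+^{m-1}\times\{0,1\}^n$ onto the index set defining $P$. This I would verify by reading the formulas above in both directions: from a tuple one recovers $a_{k+1}=t_k-t_{k+1}\in\bbZ_+$ ($1\le k\le m-1$) and $b_j=t_{m+j-1}-t_{m+j}\in\{0,1\}$, and the constraints ``$t_1\ge\dots\ge t_{m+n-1}\ge t_{m+n}=0$ with consecutive differences in $\{0,1\}$ precisely for $m\le k\le m+n-1$'' are exactly what makes such a $(\underline{a};\underline{b})$ legitimate. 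For part (2): by the discussion preceding the theorem, $V(\mu)$ is spanned precisely by the $v(\underline{a};\underline{b})$ with $m_{\underline{a};\underline{b}}\ge\mu+1$; for $m=1$ I would check the analogous fact the same way — the span of the monomials of degree $\ge\mu+1$ is a proper submodule, hence lies in the unique maximal submodule $V(\mu)$, while applying the generators to $y_1\cdots y_{\mu+1}$ cannot lower the degree below $\mu+1$ because $e_{1,m+k}=e_{1,1}\p{y_k}$ acts on a degree-$(\mu+1)$ vector by the scalar $\mu-\mu=0$. Restricting the bijection of part (1) to $t_1=N\ge\mu+1$ then gives $Q$ and the stated formula.

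For parts (3) and (4): each generator $e_{i,j}$ of $S$ (with $2\le i,j\le m+n$) has the form $z_i\p{z_j}$ and so preserves the grading $W=\bigoplus_s W_s$; hence each $W_s$ is an $S$-submodule, and under the isomorphism $S\cong\mathfrak{gl}_{m-1|n}(\bbC)$ of Theorem~\ref{mainrep} it is the $s$-th (super)symmetric power of the natural $S$-module spanned by $x_2,\dots,x_m,y_1,\dots,y_n$ (for $m=1$, the $s$-th exterior power of $\bbC^n$). For irreducibility I would argue directly: $W$ is a weight module over $\mathfrak{h}_S$ with one-dimensional weight spaces, since the $\mathfrak{h}_S$-weight $(a_2,\dots,a_m\mid b_1,\dots,b_n)$ of $v(\underline{a};\underline{b})$ determines the monomial; thus a nonzero $S$-submodule of $W_s$ contains a monomial, and the identity $x_2^{s}=\frac{1}{a_3!\cdots a_m!}e_{2,m+n}^{b_n}\cdots e_{2,m+1}^{b_1}e_{2,m}^{a_m}\cdots e_{2,3}^{a_3}v(\underline{a};\underline{b})$ from the proof of Theorem~\ref{finite} (all operators appearing there lie in $S$) shows it contains $x_2^{s}$; since applying $e_{i,2}$ and $e_{m+k,2}$ repeatedly to $x_2^{s}$ produces every degree-$s$ monomial, the submodule is all of $W_s$. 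Finally $x_2^{s}$ is annihilated by every positive root vector $e_{i,j}$ ($2\le i<j\le m+n$) of $S$ and has $\mathfrak{h}_S$-weight $s\epsilon_2$, so $W_s$ is the irreducible highest weight $S$-module with that highest weight. The $m=1$ case is identical with $y_1\cdots y_s$ (of weight $\epsilon_2+\cdots+\epsilon_{s+1}$) replacing $x_2^s$, $s$ ranging over $0,\dots,n$, and the analogous identity producing $y_1\cdots y_s$ from an arbitrary basis monomial of $W_s$.

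The calculations are all routine; the only places needing real care are the bookkeeping in part (1) — rewriting the $\epsilon_i$-weights in $\alpha_k$-coordinates and checking that the index set obtained is exactly $P$ — and, in part (2), verifying that $V(\mu)$ coincides with, rather than merely sits inside, the span of the monomials of degree $\ge\mu+1$.
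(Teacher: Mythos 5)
Your proposal is correct and follows essentially the same route as the paper, which states this theorem as a consequence of the preceding discussion (the weight bookkeeping for $v(\underline{a};\underline{b})$ in $\alpha_k$-coordinates, the identification of $V(\mu)$ with the span of monomials of degree $\geq\mu+1$, and the identities from the proofs of Theorems \ref{finite} and 2.5 that produce $x_2^s$, resp.\ $y_1\cdots y_s$, from an arbitrary monomial). You have merely written out in full the routine verifications — the bijection onto $P$, the degree-$\geq\mu+1$ span being exactly $V(\mu)$, and the multiplicity-one weight argument for the $S$-module structure of each $W_s$ — which is precisely the intended derivation.
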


\section{Representations of affine Lie superalgebra $\widehat{\mathfrak{gl}_{m|n}}(\bbC)$}


In this section we focus on some affine Lie superalgebra. Let
$$
\widehat{\mathfrak{gl}_{m|n}}(\bbC)=\widetilde{\mathfrak{gl}_{m|n}}(\bbC)\oplus\bbC d
$$
be the affine Kac-Moody Lie superalgebra associated to $\mathfrak{gl}_{m|n}(\bbC)$ (see \cite{CS}), here
$$\widetilde{\mathfrak{gl}_{m|n}}(\bbC)=\mathfrak{gl}_{m|n}(\bbC)\otimes\bbC [t,t^{-1}]\oplus\bbC K ,$$
and $K$ is a central clement of $\widehat{\mathfrak{gl}_{m|n}}(\bbC)$.
The commutation relations in $\widehat{\mathfrak{gl}_{m|n}}(\bbC)$ are given by
$$
[a\otimes t^{m_1}, b\otimes t^{n_1}] = [a, b]\otimes t^{m_1+n_1} + m_1 \delta_{m_1,-n_1}(a, b)K,\;\;
[d, a\otimes t^{m_1}] = m_1 a\otimes t^{m_1},\;\; [K,\widehat{\mathfrak{gl}_{m|n}}(\bbC)] = 0.
$$
where $(\;.\;)$ is the non-degenerate even invariant supersymmetric bilinear form on $\mathfrak{gl}_{m|n}(\bbC)$.
In particular, we can regard $\mathfrak{gl}_{m|n}(\bbC)$ as the subalgebra $\mathfrak{gl}_{m|n}(\bbC) \otimes 1$
of $\widehat{\mathfrak{gl}_{m|n}}(\bbC)$ and
$H=\mathfrak{h}\otimes1\oplus\bbC K \oplus\bbC d$ is the Cartan subalgebra of $\widehat{\mathfrak{gl}_{m|n}}(\bbC)$.

Let $\widehat{W}_1$ be the free algebra generated by infinite elements
$x_i(m_1),\, y_k(n_1)$ for $m_1,\,n_1\in\bbZ$, $i=2,\ldots,m$ and $k=1,\ldots,n$.
Take $\widehat{W}_2$ to be the two-sided ideal of $\widehat{W}_1$ generated by
$$
x_i(m_1)x_j(n_1)-x_j(n_1)x_i(m_1),\,\,
x_i(m_1)y_k(n_1)-y_k(n_1)x_i(m_1),\,\,
y_k(m_1)y_l(n_1)+y_l(n_1)y_k(m_1)
$$
for $m_1,\,n_1\in\bbZ$, $i,\,j=2,\ldots,m$ and $k,\,l=1,\ldots,n$.
Set
$$
\widehat{W}=\widehat{W}_1/{\widehat{W}_2}.
$$

Then we have
\begin{align*}
&[x_i(m_1),x_j(n_1)]=[x_i(m_1),y_k(n_1)]=\{y_k(m_1),y_l(n_1)\}_+=\{\frac{\partial}{\partial y_k(m_1)},\frac{\partial}{\partial y_l(n_1)}\}_+=0,\\
&[\frac{\partial}{\partial x_i(m_1)},\frac{\partial}{\partial x_j(n_1)}]=[\frac{\partial}{\partial x_i(m_1)},\frac{\partial}{\partial y_k(n_1)}]=[\frac{\partial}{\partial x_i(m_1)},y_k(n_1)]=[\frac{\partial}{\partial y_k(m_1)},x_i(n_1)]=0,\\
&[\frac{\partial}{\partial x_i(m_1)},x_j(n_1)]=\delta_{i,j}\delta_{m_1,n_1},\,\{y_k(m_1),\frac{\partial}{\partial y_l(n_1)}\}_+=\delta_{k,l}\delta_{m_1,n_1},
\end{align*}
for $m_1,\,n_1\in\bbZ$, $i,\,j=2,\ldots,m$ and $k,\,l=1,\ldots,n$.
We define the following operators on $\widehat{W}$:
\begin{align}
\begin{cases}
& e_{1,1}(m_1) \ = \  \mu\delta_{m_1,0}-\sum\limits_{m_2\in\bbZ}\sum\limits_{i=2}^{m} x_i(m_1+m_2) \frac{\partial} {\partial x_i(m_2)}-\sum\limits_{m_2\in\bbZ}\sum\limits_{i=1}^{n} y_i(m_1+m_2)\frac{\partial}{\partial y_i(m_2)},\\
&e_{1,i}(m_1)=\sum\limits_{m_2\in\bbZ}e_{1,1}(m_1+m_2)\frac{\partial}{\partial x_i(m_2)},\\
&e_{1,m+k}(m_1)=\sum\limits_{m_2\in\bbZ}e_{1,1}(m_1+m_2)\frac{\partial}{\partial y_k(m_2)},\\
&e_{i,1}(m_1)=x_i(m_1), \\
&e_{m+k,1}(m_1)=y_k(m_1),\\
&e_{i,j}(m_1)=\sum\limits_{m_2\in \bbZ}x_i(m_1+m_2)\frac{\partial}{\partial x_j(m_2)},\\
&e_{i,m+k}(m_1)=\sum\limits_{m_2\in \bbZ}x_i(m_1+m_2)\frac{\partial}{\partial y_i(m_2)}, \\
&e_{m+k,j}(m_1)=\sum\limits_{m_2\in \bbZ}y_k(m_1+m_2)\frac{\partial}{\partial x_j(m_2)}, \\
&e_{m+k,m+l}(m_1)=\sum\limits_{m_2\in \bbZ}y_k(m_1+m_2)\frac{\partial}{\partial y_l(m_2)},
\end{cases}
\end{align}
where $m_1\in\bbZ$, $i,\,j=2,\ldots,m$ and $k,\,l=1,\ldots,n$.

We may view $\widehat{W}$ as a superalgebra with $|x_i(m_1)| = \bar{0}$,
$|y_k(m_1)| = \bar{1}$ for $m_1\in\bbZ$, $i=2,\ldots,m$ and $k=1,\ldots,n$.
It follows that $\mathfrak{gl}(\widehat{W})$ is a Lie superalgebra. Then we have the following theorem:	

\begin{thm} \label{mainrep1}
\begin{enumerate}
The linear map
\begin{displaymath}
\varphi: \widetilde{\mathfrak{gl}_{m|n}}(\bbC) \longrightarrow \mathfrak{gl}(\widehat{W})
\end{displaymath}
given by
\begin{equation*}
\varphi\big( E_{i,j} \otimes t^{m_1} \big) \ = \ e_{i,j}(m_1), \text{ for } \, i,j = 1,\ldots, m+n, \quad \varphi(K)=0,
\end{equation*}
is a Lie superalgebra homomorphism.  That is, $\widehat{W}$ is a $\widetilde{\mathfrak{gl}_{m|n}}(\bbC)$-module.
\end{enumerate}
\end{thm}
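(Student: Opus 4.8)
The plan is to verify directly that $\varphi$ is a morphism of Lie superalgebras. It clearly preserves parity: $e_{i,j}(m_1)$ has parity $|i|+|j|$, the same as that of $E_{i,j}\otimes t^{m_1}$, and $\varphi(K)=0$ is even. Since $\widetilde{\mathfrak{gl}_{m|n}}(\bbC)$ is spanned over $\bbC$ by the elements $E_{i,j}\otimes t^{m_1}$ together with the central element $K$, and since $\varphi(K)=0$, it suffices to establish
\begin{equation*}
[e_{i,j}(m_1),\, e_{k,l}(n_1)] \ = \ \delta_{j,k}\,e_{i,l}(m_1+n_1)\ -\ (-1)^{(|i|+|j|)(|k|+|l|)}\,\delta_{i,l}\,e_{k,j}(m_1+n_1)
\end{equation*}
for all $i,j,k,l\in\{1,\dots,m+n\}$ and all $m_1,n_1\in\bbZ$, the bracket on the left being the supercommutator in $\mathfrak{gl}(\widehat{W})$. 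Part of this is to confirm that nothing further survives: the defining relations of $\widetilde{\mathfrak{gl}_{m|n}}(\bbC)$ also contribute the cocycle term $m_1\delta_{m_1,-n_1}(E_{i,j},E_{k,l})K$, which $\varphi$ sends to $0$, so the computation must show that this realization is at level zero. I would first record the point, used throughout, that each $e_{i,j}(m_1)$ is a well-defined operator on $\widehat{W}$ even though it is an infinite sum: on any given monomial only finitely many of the derivations $\p{x_s(m_2)}$, $\p{y_t(m_2)}$ act nontrivially, so every series $\summn$ that appears is locally finite; this also justifies every interchange of summation and every term-by-term bracket computation used below.

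I would organize the verification by the block partition of the index set into $\{1\}$, $\{2,\dots,m\}$, $\{m+1,\dots,m+n\}$, using the (super)antisymmetry of the bracket to cut the number of cases roughly in half. When all four indices lie in $\{2,\dots,m+n\}$, the operators $e_{i,j}(m_1)$ are the familiar ``bilinear'' operators $\summn(\text{multiplication})(m_1+m_2)\,(\text{derivation})(m_2)$ acting on the polynomial--exterior algebra $\widehat{W}$, and the identities above reduce to the standard Heisenberg--Clifford computation: one expands the supercommutator using $[\p{x_i(p)},x_j(q)]=\delta_{i,j}\delta_{p,q}$ and $\{\p{y_k(p)},y_l(q)\}_+=\delta_{k,l}\delta_{p,q}$ and reindexes the resulting double sums. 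The only care required here is with the signs coming from the odd generators $y_k(m_1)$ and $\p{y_k(m_2)}$, which are governed by the (anti)commutation relations recorded immediately before the statement; no scalar term appears in any of these cases.

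The essential work lies in the cases that involve the index $1$, and I expect the identities $[e_{1,i}(m_1),e_{1,j}(n_1)]=0$ and $[e_{i,1}(m_1),e_{1,j}(n_1)]$ for $i,j\geq 2$ to be the main obstacle, because $e_{1,i}(m_1)=\summn e_{1,1}(m_1+m_2)\p{x_i(m_2)}$ is itself a sum of products of two locally finite sums, so a bracket of two such operators produces several reordering contributions that must be shown to telescope after a change of summation variables. I would carry this out in two stages. First establish the ``master'' relations for $e_{1,1}(m_1)$: namely $[x_i(m_1),e_{1,1}(p)]=x_i(m_1+p)$ and $[\p{x_i(m_1)},e_{1,1}(p)]=-\p{x_i(m_1-p)}$ together with their $y$-analogues, as well as $[e_{1,1}(m_1),e_{1,1}(n_1)]=0$ and $[e_{1,1}(m_1),e_{i,j}(n_1)]=0$ for $i,j\geq 2$; each is a short reindexing argument. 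Second, feed these into the definitions of $e_{1,i}(m_1)$, $e_{1,m+k}(m_1)$ and of $e_{i,1}(m_1)=x_i(m_1)$, $e_{m+k,1}(m_1)=y_k(m_1)$ to deduce the remaining brackets. For instance one finds $[e_{i,1}(m_1),e_{1,i}(-m_1)]=e_{i,i}(0)-(-1)^{|i|}e_{1,1}(0)$ with no residual constant, which is precisely the image of $[E_{i,1},E_{1,i}]=E_{i,i}-(-1)^{|i|}E_{1,1}$ and confirms that the cocycle vanishes in this realization; and the ``nontrivial zero'' $[e_{1,i}(m_1),e_{1,j}(n_1)]=0$ emerges once the two double sums it produces are matched up by relabelling the summation indices. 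The affine analogue of the identity $\sum_{i=1}^{m+n}e_{i,i}=\mu$ observed in the finite-dimensional case, namely $e_{1,1}(m_1)+\sum_{s=2}^{m}e_{s,s}(m_1)+\sum_{k=1}^{n}e_{m+k,m+k}(m_1)=\mu\delta_{m_1,0}$, serves as a running consistency check and shortens several of these arguments. The sole genuinely delicate point is keeping the triple sums under control, which the local-finiteness remark handles; once that bookkeeping is in place I do not anticipate further difficulty.
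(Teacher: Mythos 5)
Your proposal is correct and follows essentially the same route as the paper: a direct verification that the operators $e_{i,j}(m_1)$ satisfy the loop-algebra supercommutator relations with no surviving scalar term (level zero, consistent with $\varphi(K)=0$), which is exactly what the paper's case-by-case computation establishes. Your only deviation is organizational — factoring the cases involving the index $1$ through the ``master'' brackets of $e_{1,1}(p)$ with $x_i(m_1)$, $y_k(m_1)$ and their derivations rather than expanding each double sum from scratch — which shortens the bookkeeping but is the same underlying computation.
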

\begin{proof}
It suffices to show that the operators given above satisfy the corresponding commutator relations.
We will check these case by case
\begin{align*}
\big[ e_{1,1}(m_1), & e_{1,1}(n_1) \big] \\
= & \bigg[\summn \sum_{i=2}^{m}  x_i(m_1+m_2) \p{x_i(m_2)}, \summpnp \sum_{j=2}^{m}  x_j(n_1+n_2) \p{x_j(n_2)} \bigg] \\
& +\bigg[\summn \sum_{i=2}^{m}  x_i(m_1+m_2) \p{x_i(m_2)}, \summpnp \sum_{j=1}^{n}  y_j(n_1+n_2) \p{y_j(n_2)} \bigg] \\
& +\bigg[\summn \sum_{i=1}^{n}  y_i(m_1+m_2) \p{y_i(m_2)}, \summpnp \sum_{j=2}^{m}  x_j(n_1+n_2) \p{x_j(n_2)} \bigg] \\
& +\bigg[\summn \sum_{i=1}^{n}  y_i(m_1+m_2) \p{y_i(m_2)}, \summpnp \sum_{j=1}^{n}  y_j(n_1+n_2) \p{y_j(n_2)} \bigg] \\
= & \summnmpnp \sum_{i,j=2}^{m} \bigg( x_i(m_1+m_2) \Big[\p{x_i(m_2)},  x_j(n_1+n_2)\Big] \p{x_j(n_2)}\\
& -x_j(n_1+n_2)\Big[\p{x_j(n_2)},  x_i(m_1+m_2)\Big] \p{x_i(m_2)}\bigg) \\
&  +\summnmpnp \sum_{i,j=1}^{n} \bigg( y_i(m_1+m_2) \Big\{\p{y_i(m_2)},  y_j(n_1+n_2)\Big\}_+ \p{y_j(n_2)}\\
& -y_j(n_1+n_2)\Big\{\p{y_j(n_2)},  y_i(m_1+m_2)\Big\}_+ \p{y_i(m_2)}\bigg) \\
= & \summnmpnp \sum_{i,j=2}^{m}  \bigg(\delta_{i,j}\delta_{m_2,n_1+n_2}x_i(m_1+m_2)\p{x_j(n_2)}
           -\delta_{i,j}\delta_{n_2,m_1+m_2}x_j(n_1+n_2)\p{x_i(m_2)}\bigg)\\
&+\summnmpnp \sum_{i,j=1}^{n} \bigg(\delta_{i,j}\delta_{m_2,n_1+n_2}y_i(m_1+m_2)\p{y_j(n_2)}
           -\delta_{i,j}\delta_{n_2,m_1+m_2}y_j(n_1+n_2)\p{y_i(m_2)}\bigg)\\
= & 0
\end{align*}

\begin{align*}
\big[ e_{1,1}(m_1), \ e_{1,i}(n_1) \big]
= & \summn  \Bigg( \Big[ e_{1,1}(m_1), e_{1,1}(n_1+m_2) \Big] \p{x_i(m_2)}\\
&+ e_{1,1}(n_1+m_2) \bigg[- \summpnp \sum_{j=2}^{m}  x_j(m_1+n_2) \p{x_j(n_2)}, \p{x_i(m_2)} \bigg]\Bigg)\\
= & \summnmpnp \sum_{j=2}^{m} e_{1,1}(n_1+m_2) \bigg[ \p{x_i(m_2)},x_j(m_1+n_2)\bigg]
 \p{x_j(n_2)} \\
= & e_{1,i}(m_1+n_1),
\end{align*}
for $i=2,\ldots,m$.
\begin{align*}
\big[ e_{1,1}(m_1), \ e_{1,m+i}(n_1) \big]
= & \summn  \Bigg( \big[ e_{1,1}(m_1), e_{1,1}(n_1+m_2) \big] \p{y_i(m_2)}\\
& + e_{1,1}(n_1+m_2) \bigg[- \summpnp \sum_{j=1}^{n}  y_j(m_1+n_2) \p{y_j(n_2)}, \p{y_i(m_2)} \bigg]\Bigg)\\
= & \summnmpnp \sum_{j=1}^{n} e_{1,1}(n_1+m_2) \bigg\{y_j(m_1+n_2), \p{y_i(m_2)}\bigg\}_+ \p{y_j(n_2)} \\
= & e_{1,m+i}(m_1+n_1),
\end{align*}
for $i=1,\ldots,n$.
\begin{align*}
\big[ e_{i,1}(m_1), & \ e_{1,1}(n_1) \big] \\
= & - \sum_{m_2\in\bbZ} \sum_{j=2}^{m} \bigg[ x_i(m_1),\  x_j(n_1+m_2) \p{x_j(m_2)} \bigg]
    - \sum_{m_2\in\bbZ} \sum_{k=1}^{n} \bigg[ x_i(m_1),\  y_k(n_1+m_2) \p{y_k(m_2)} \bigg]\\
= & - \sum_{m_2\in\bbZ} \sum_{j=2}^m  x_j(n_1+m_2) \bigg[ x_i(m_1), \p{x_j(m_2)} \bigg]\\
= & \summn \sum_{j=2}^m  x_j(n_1+m_2) \delta_{i,j} \delta_{m_1,m_2} \\
= & e_{i,1}(m_1+n_1),
\end{align*}
for $i=2,\ldots,m$.
\begin{align*}
\big[ e_{m+i,1}(m_1), \ e_{1,1}(n_1) \big]
= & - \sum_{m_2\in\bbZ} \sum_{j=1}^{n} \bigg[ y_i(m_1),\  y_j(n_1+m_2) \p{y_j(m_2)} \bigg] \\
= &  \sum_{m_2\in\bbZ} \sum_{j=1}^{n}  y_j(n_1+m_2) \bigg\{ y_i(m_1), \p{y_j(m_2)} \bigg\}_+  \\
= & \sum_{m_2\in\bbZ} \sum_{j=1}^{n}  y_j(n_1+m_2) \delta_{i,j} \delta_{m_1,m_2} \\
= & e_{m+i,1}(m_1+n_1),
\end{align*}
for $i=1,\ldots,n$.
\begin{align*}
\big[ e_{i,j}(m_1), \ e_{1,1}(n_1) \big]
= &- \bigg[  \summn x_i(m_1+m_2) \p{x_j(m_2)}, \ \summpnp \sum_{k=2}^{m}  x_k(n_1+n_2) \p{x_k(n_2)} \bigg] \\
= & \summnmpnp \ \sum_{k=2}^{m} \Bigg( - x_i(m_1+m_2) \bigg[ \p{x_j(m_2)}, x_k(n_1+n_2) \bigg] \p{x_k(n_2)} \\
& + x_k(n_1+n_2) \bigg[ \p{x_k(n_2)}, x_i(m_1+m_2) \bigg] \p{x_j(m_2)} \Bigg)\\
= & - \summpnp  x_i(m_1+n_1+n_2) \p{x_k(n_2)} + \summn  x_i(m_1+n_1+m_2) \p{x_j(m_2)} \\
= & 0,
\end{align*}
for $i,\,j=2,\ldots,m$.
\begin{align*}
\big[ e_{i,m+j}(m_1), \ e_{1,1}(n_1) \big]
= & -\bigg[  \summn x_i(m_1+m_2) \p{y_j(m_2)}, \ \summpnp \sum_{k=2}^{m}  x_k(n_1+n_2) \p{x_k(n_2)} \bigg] \\
& - \bigg[  \summn x_i(m_1+m_2) \p{y_j(m_2)}, \ \summpnp \sum_{k=1}^{n}  y_k(n_1+n_2) \p{y_k(n_2)} \bigg] \\
= & \summnmpnp \ \sum_{k=2}^{m} x_k(n_1+n_2) \bigg[ \p{x_k(n_2)}, x_i(m_1+m_2) \bigg] \p{y_j(m_2)} \\
& - \summnmpnp \ \sum_{k=1}^{n}  x_i(m_1+m_2) \bigg\{ \p{y_j(m_2)}, y_k(n_1+n_2) \bigg\}_+  \p{y_k(n_2)} \\
= & \summn  x_i(m_1+n_1+m_2) \p{y_j(m_2)}- \summpnp  x_i(m_1+n_1+n_2) \p{y_j(n_2)}  \\
= & 0,
\end{align*}
for $i=2,\ldots,m$ and $j=1,\ldots,n$.
\begin{align*}
\big[ e_{m+i,j}(m_1), \ e_{1,1}(n_1) \big]
= &- \bigg[  \summn y_i(m_1+m_2) \p{x_j(m_2)}, \ \summpnp \sum_{k=2}^{m}  x_k(n_1+n_2) \p{x_k(n_2)} \bigg] \\
& - \bigg[  \summn y_i(m_1+m_2) \p{x_j(m_2)}, \ \summpnp \sum_{k=1}^{n}  y_k(n_1+n_2) \p{y_k(n_2)} \bigg] \\
= & -\summnmpnp \ \sum_{k=2}^{m} y_i(m_1+m_2) \bigg[ \p{x_j(m_2)}, x_k(n_1+n_2) \bigg]\p{x_k(n_2)} \\
& + \summnmpnp \ \sum_{k=1}^{n} y_k(n_1+n_2) \bigg\{ \p{y_k(n_2)}, y_i(m_1+m_2) \bigg\}_+  \p{x_j(m_2)}\\
= & - \summpnp  y_i(m_1+n_1+n_2) \p{x_j(n_2)} + \summn  y_i(m_1+n_1+m_2) \p{x_j(m_2)} \\
= & 0,
\end{align*}
for $i=1,\ldots,n$ and $j=2,\ldots,m$.
\begin{align*}
\big[ e_{m+i,m+j}(m_1), \ e_{1,1}(n_1) \big]
= &- \bigg[  \summn y_i(m_1+m_2) \p{y_j(m_2)}, \ \summpnp \sum_{k=2}^{m}  x_k(n_1+n_2) \p{x_k(n_2)} \bigg] \\
& - \bigg[  \summn y_i(m_1+m_2) \p{y_j(m_2)}, \ \summpnp \sum_{k=1}^{n}  y_k(n_1+n_2) \p{y_k(n_2)} \bigg] \\
= & \summnmpnp \ \sum_{k=1}^{n} \Bigg( - y_i(m_1+m_2) \bigg\{ \p{y_j(m_2)}, y_k(n_1+n_2) \bigg\}_+  \p{y_k(n_2)} \\
& + y_k(n_1+n_2) \bigg\{ \p{y_k(n_2)}, y_i(m_1+m_2) \bigg\}_+ \p{y_j(m_2)} \Bigg)\\
= & - \summpnp  y_i(m_1+n_1+n_2) \p{y_j(n_2)} + \summn  y_i(m_1+n_1+m_2) \p{y_j(m_2)} \\
= & 0,
\end{align*}
for $i,\,j=1,\ldots,n$.
\begin{align*}
\big[ e_{1,i}(m_1), & \ e_{1,j}(n_1) \big] \\
= & \summn  \big[ e_{1,i}(m_1), \ e_{1,1}(n_1+m_2) \big] \p{x_j(m_2)}\\
& + \summn  e_{1,1}(n_1+m_2) \bigg[ \summpnp  e_{1,1}(m_1+n_2) \p{x_i(n_2)}, \p{x_j(m_2)} \bigg] \\
= &  -\summn e_{1,i}(m_1+n_1+m_2)\p{x_j(m_2)}\\
& - \summnmpnp  e_{1,1}(n_1+m_2) \Bigg[ \bigg( \summppnpp \sum_{k=2}^{m}  x_k(m_1+n_2+n_3) \p{x_k(n_3)} \bigg) \p{x_i(n_2)}, \p{x_j(m_2)} \Bigg] \\
= & -\summn e_{1,i}(m_1+n_1+m_2)\p{x_j(m_2)}\\
&  - \summnmpnpmppnpp \sum_{k=2}^{m}  e_{1,1}(n_1+m_2) \bigg[ x_k(m_1+n_2+n_3), \p{x_j(m_2)}\bigg] \p{x_k(n_3)} \p{x_i(n_2)}\\
= & -\summn e_{1,i}(m_1+n_1+m_2)\p{x_j(m_2)}  + \summpnpmppnpp  e_{1,1}(n_1+m_1+n_2+n_3) \p{x_j(n_3)} \p{x_i(n_2)} \\
= & 0,
\end{align*}
for $i,\,j=2,\ldots,m$.
\begin{align*}
\big[ e_{1,i}&(m_1), \ e_{1,m+j}(n_1) \big] \\
= & \summn  \big[ e_{1,i}(m_1), \ e_{1,1}(n_1+m_2) \big] \p{y_j(m_2)}
    + \summn  e_{1,1}(n_1+m_2) \bigg[ \summpnp  e_{1,1}(m_1+n_2) \p{x_i(n_2)}, \p{y_j(m_2)} \bigg] \\
= &  -\summn e_{1,i}(m_1+n_1+m_2)\p{y_j(m_2)} \\
& - \summnmpnp  e_{1,1}(n_1+m_2) \Bigg[ \bigg( \summppnpp \sum_{k=1}^{n}  y_k(m_1+n_2+n_3) \p{y_k(n_3)} \bigg) \p{x_i(n_2)}, \p{y_j(m_2)} \Bigg] \\
= & -\summn e_{1,i}(m_1+n_1+m_2)\p{y_j(m_2)}\\
& + \summnmpnpmppnpp \sum_{k=1}^{n}  e_{1,1}(n_1+m_2) \bigg\{ y_k(m_1+n_2+n_3), \p{y_j(m_2)}\bigg\}_+ \p{y_k(n_3)} \p{x_i(n_2)}\\
= & -\summn e_{1,i}(m_1+n_1+m_2)\p{y_j(m_2)}  + \summpnpmppnpp  e_{1,1}(n_1+m_1+n_2+n_3) \p{y_j(n_3)} \p{x_i(n_2)} \\
= & -\summn e_{1,i}(m_1+n_1+m_2)\p{y_j(m_2)}  + \summppnpp e_{1,i}(m_1+n_1+n_3)\p{y_j(n_3)}\\
= & 0,
\end{align*}
for $i=2,\ldots,m$ and $j=1,\ldots,n$.
\begin{align*}
\big[ e_{i,1}(m_1), e_{1,j}(n_1) \big]
= & \bigg[ e_{i,1}(m_1), \ \sum_{m_2\in\bbZ}  e_{1,1}(n_1+m_2) \p{x_j(m_2)}   \bigg] \\
= & \sum_{m_2\in\bbZ}  \Bigg( \bigg[ e_{i,1}(m_1), e_{1,1}(n_1+m_2) \bigg] \p{x_j(m_2)} + e_{1,1}(n_1+m_2) \bigg[ x_i(m_1), \p{x_j(m_2)} \bigg] \Bigg) \\
= &  e_{i,j}(m_1+n_1) - \sum_{m_2\in\bbZ} e_{1,1}(n_1+m_2) \delta_{i,j}\delta_{m_1,m_2}\\
= &  e_{i,j}(m_1+n_1) - \delta_{i,j}  e_{1,1}(m_1+n_1),
\end{align*}
for $i,\,j=2,\ldots,m$.
\begin{align*}
\big[\ e_{m+i,1}(m_1),\ e_{1,j}(n_1) \big]
= & \summpnp  \big[ e_{m+i,1}(m_1), \ e_{1,1}(n_1+n_2) \big] \p{x_j(n_2)}\\
= & \summpnp e_{m+i,1}(m_1+n_1+n_2)\p{x_j(n_2)}\\
= & e_{m+i,j}(m_1+n_1),
      \end{align*}
for $i=1,\ldots,n$ and $j=2,\ldots,m.$
\begin{align*}
\big[ e_{i,j}(m_1), \ e_{1,k}(n_1) \big]
= & \summnmpnp  e_{1,1}(n_1+n_2) \bigg[ x_i(m_1+m_2) \p{x_j(m_2)}, \p{x_k(n_2)} \bigg] \\
= & -\summnmpnp  e_{1,1}(n_1+n_2)\bigg[ \p{x_k(n_2)}, x_i(m_1 + m_2) \bigg]  \p{x_j(m_2)}  \\
= & -\delta_{ik}  e_{1,j}(m_1+n_1),
\end{align*}
for $i,\,j,\,k=2,\ldots,m$.
\begin{align*}
\big[ e_{i,m+j}(m_1), \ e_{1,k}(n_1) \big]
= & \summnmpnp  e_{1,1}(n_1+n_2) \bigg[ x_i(m_1+m_2) \p{y_j(m_2)}, \p{x_k(n_2)} \bigg] \\
= & -\summnmpnp  e_{1,1}(n_1+n_2)\bigg[ \p{x_k(n_2)}, x_i(m_1 + m_2) \bigg] \p{y_j(m_2)}\\
= & -\delta_{ik}  e_{1,m+j}(m_1+n_1),
\end{align*}
for $i,\,k=2,\ldots,m$ and $j=1,\ldots,n$.
\begin{align*}
\big[ e_{m+i,j}(m_1), \ e_{1,k}(n_1) \big]
= \summnmpnp  e_{1,1}(n_1+n_2) \big[ y_i(m_1+m_2) \p{x_j(m_2)}, \p{x_k(n_2)} \big]
= 0,
\end{align*}
for $i=1,\ldots,n$ and $j,\,k=2,\ldots,m$.
\begin{align*}
\big[ e_{m+i,m+j}(m_1), \ e_{1,k}(n_1) \big]
= \summnmpnp  e_{1,1}(n_1+n_2) \big[ y_i(m_1+m_2) \p{y_j(m_2)}, \p{x_k(n_2)} \big]
= 0,
\end{align*}
for $i,\,j=1,\ldots,n$ and $k=2,\ldots,m$.

\begin{align*}
\big\{ e_{1,m+i}(m_1),& \ e_{1,m+j}(n_1) \big\}_+ \\
= & \summn  \big[ e_{1,m+i}(m_1), \ e_{1,1}(n_1+m_2) \big] \p{y_j(m_2)} \\
& + \summn  e_{1,1}(n_1+m_2) \bigg\{ \summpnp  e_{1,1}(m_1+n_2) \p{y_i(n_2)}, \p{y_j(m_2)} \bigg\}_+ \\
= & -  \summn  e_{1,m+i}(m_1+n_1+m_2) \p{y_j(m_2)}+ \summnmpnp  e_{1,1}(n_1+m_2) \Bigg\{ \bigg( \mu \delta_{m_1+n_2,0} \\
&  - \summppnpp \sum_{k=1}^{n}  y_k(m_1+n_2+n_3) \p{y_k(n_3)} \bigg) \p{y_i(n_2)}, \p{y_j(m_2)} \Bigg\}_+ \\
= & -  \summn  e_{1,m+i}(m_1+n_1+m_2) \p{y_j(m_2)} \\
& - \summnmpnpmppnpp \sum_{k=1}^{n}  e_{1,1}(n_1+m_2) \bigg\{ \p{y_j(m_2)}, y_k(m_1+n_2+n_3) \bigg\}_+\p{y_k(n_3)} \p{y_i(n_2)} \\
= & -  \summn  e_{1,m+i}(m_1+n_1+m_2) \p{y_j(m_2)}- \summpnpmppnpp  e_{1,1}(n_1+m_1+n_2+n_3) \p{y_j(n_3)} \p{y_i(n_2)} \\
= &  0,
\end{align*}
for $i,j=1,\ldots,n$.	
\begin{align*}
\big[\ e_{i,1}(m_1),\ \ e_{1,m+j}(n_1)\big]
= & \summpnp  \big[ e_{i,1}(m_1), \ e_{1,1}(n_1+n_2) \big] \p{y_j(n_2)}\\
= & \summpnp e_{i,1}(m_1+n_1+n_2)\p{y_j(n_2)}\\
= & e_{i,m+j}(m_1+n_1),
\end{align*}
for $i=2,\ldots,m$ and $j=1,\ldots,n.$
\begin{align*}
\big\{ e_{m+i,1}(m_1), & \ e_{1,m+j}(n_1) \big\}_+ \\
= & \ \bigg\{ e_{m+i,1}(m_1), \ \sum_{m_2\in\bbZ}  e_{1,1}(n_1+m_2) \p{y_j(m_2)} \bigg\}_+ \\
= & \sum_{m_2\in\bbZ}  \Bigg( \bigg[ e_{m+i,1}(m_1),\  e_{1,1}(n_1+m_2) \bigg] \p{y_j(m_2)}
      + e_{1,1}(n_1+m_2) \bigg\{ y_i(m_1), \p{y_j(m_2)} \bigg\}_+ \Bigg) \\
= &  e_{m+i,m+j}(m_1+n_1) + \sum_{m_2\in\bbZ}  e_{1,1}(n_1+m_2) \delta_{i,j}\delta_{m_1,m_2}\\
= &  e_{m+i,m+j}(m_1+n_1) + \delta_{i,j}  e_{1,1}(m_1+n_1),
\end{align*}
for $i,\,j=1,\ldots,n$.
\begin{align*}
\big[ e_{i,j}(m_1),&  \ e_{1,m+k}(n_1) \big] \\
= & \summpnp  \big[ e_{i,j}(m_1), \ e_{1,1}(n_1+n_2) \big] \p{y_k(n_2)}
  + \summnmpnp  e_{1,1}(n_1+n_2) \bigg[ x_i(m_1+m_2) \p{x_j(m_2)}, \p{y_k(n_2)} \bigg] \\
= & 0,
\end{align*}
for $i,\,j=2,\ldots,m$ and $k=1,\ldots,n$.
\begin{align*}
\big\{&e_{i, m+j}(m_1), \ e_{1,m+k}(n_1) \big\}_+ \\
& = \summpnp  \big[ e_{i,m+j}(m_1), \ e_{1,1}(n_1+n_2) \big] \p{y_k(n_2)}
    + \summnmpnp  e_{1,1}(n_1+n_2) \bigg\{ x_i(m_1+m_2) \p{y_j(m_2)}, \p{y_k(n_2)} \bigg\}_+ \\
& = 0,
\end{align*}
for $i=2,\ldots,m$ and $j,\,k=1,\ldots,n$.
\begin{align*}
\big\{& e_{m+i,j}(m_1), \ e_{1,m+k}(n_1) \big\}_+ \\
& = \summpnp  \big[ e_{m+i,j}(m_1), \ e_{1,1}(n_1+n_2) \big] \p{y_k(n_2)}
     + \summnmpnp  e_{1,1}(n_1+n_2) \bigg\{ y_i(m_1+m_2) \p{x_j(m_2)}, \p{y_k(n_2)} \bigg\}_+ \\
& = \summnmpnp  e_{1,1}(n_1+n_2)\bigg\{ \p{y_k(n_2)}, y_i(m_1 + m_2) \bigg\}_+  \p{x_j(m_2)}  \\
& = \delta_{i,k}e_{1,j}(m_1+n_1),
\end{align*}
for $i,\,k=1,\ldots,n$ and $j=2,\ldots,m$.
\begin{align*}
\big[ & e_{m+i,m+j}(m_1), \ e_{1,m+k}(n_1) \big] \\
& = \summpnp  \big[ e_{m+i,j}(m_1), \ e_{1,1}(n_1+n_2) \big] \p{y_k(n_2)}
     + \summnmpnp  e_{1,1}(n_1+n_2) \bigg[ y_i(m_1+m_2) \p{y_j(m_2)}, \p{y_k(n_2)} \bigg] \\
& = -\summnmpnp  e_{1,1}(n_1+n_2) \bigg\{ \p{y_k(n_2)}, y_i(m_1 + m_2) \bigg\}_+  \p{y_j(m_2)} \\
& = -\delta_{i,k}e_{1,m+j}(m_1+n_1),
\end{align*}
for $i,\,j,\,k=1,\ldots,n$.
\begin{displaymath}
\big[ e_{i,1}(m_1), \ e_{j,1}(n_1) \big] \ = \ \big[ x_i(m_1), \ x_j(n_1) \big] \ = \ 0,
\end{displaymath}
for $i,\,j=2,\ldots,m.$
\begin{displaymath}
\big[ e_{i,1}(m_1), \ e_{m+j,1}(n_1) \big] \ = \ \big[ x_i(m_1), \ y_j(n_1) \big] \ = \ 0,
\end{displaymath}
for $i=2,\ldots,m$ and $j=1,\ldots,n$.
\begin{align*}
\big[ e_{i,1}(m_1), \ e_{j,k}(n_1) \big] \ = &\bigg[ x_i(m_1),\ \sum_{m_2 \in \bbZ} x_j(n_1+m_2) \p{x_k(m_2)} \bigg] \\
= & \sum_{m_2\in\bbZ}  x_j(n_1+m_2)  \bigg[ x_i(m_1), \p{x_k(m_2)}\bigg]  \\
= & -\sum_{m_2\in\bbZ}  \delta_{i,k} \delta_{m_1, m_2} x_j(n_1+m_2) \\
= & - \delta_{i,k}  e_{j,1}(m_1+n_1),
	\end{align*}
for $i,\,j,\,k=2,\ldots,m$.
\begin{align*}
\big[ e_{i,1}(m_1), \ e_{j,m+k}(n_1) \big] \ = \ \bigg[ x_i(m_1),\ \sum_{m_2 \in \bbZ} x_j(n_1+m_2) \p{y_k(m_2)} \bigg] = 0,
\end{align*}
for $i,\,j=2,\ldots,m$ and $k=1,\dots,n.$
\begin{align*}
\big[ e_{i,1}(m_1), \ e_{m+j,k}(n_1) \big] \ = & \bigg[ x_i(m_1),\ \sum_{m_2 \in \bbZ} y_j(n_1+m_2)
\p{x_k(m_2)} \bigg] \\
= & -\sum_{m_2\in\bbZ}   y_j(n_1+m_2)  \bigg[ \p{x_k(m_2)}, x_i(m_1)\bigg] \\
= & -\sum_{m_2\in\bbZ}  \delta_{i,k} \delta_{m_1, m_2} y_j(n_1+m_2) \\
= & - \delta_{i,k}  e_{m+j,1}(m_1+n_1),
\end{align*}
for $i,\,k=2,\ldots,m$ and $j=1,\dots,n.$
\begin{align*}
\big[ e_{i,1}(m_1), \ e_{m+j,m+k}(n_1) \big] \ = \bigg[ x_i(m_1),\ \sum_{m_2 \in \bbZ} y_j(n_1+m_2) \p{y_k(m_2)} \bigg] =  0,
\end{align*}
for $i=2,\ldots,m$ and $j,\,k=1,\dots,n.$
\begin{displaymath}
\big\{ e_{m+i,1}(m_1), \ e_{m+j,1}(n_1) \big\}_+ \ = \ \big\{ y_i(m_1), \ y_j(n_1) \big\}_+ \ = \ 0,
\end{displaymath}
for $i,\,j=1,\ldots,n $.
\begin{align*}
\big[ e_{m+i,1}(m_1), \ e_{j,k}(n_1) \big] \ = \bigg[ y_i(m_1),\ \sum_{m_2 \in \bbZ} x_j(n_1+m_2) \p{x_k(m_2)} \bigg]=  0,
\end{align*}
for $i=1,\ldots,n$ and $j,\,k=2,\ldots,m$.
\begin{align*}
\big\{ e_{m+i,1}(m_1), \ e_{j,m+k}(n_1) \big\}_+ \
= & \bigg\{ y_i(m_1),\ \sum_{m_2 \in \bbZ}  x_j(n_1+m_2) \p{y_k(m_2)} \bigg\}_+ \\
= & \sum_{m_2\in\bbZ}  x_j(n_1+m_2)  \bigg\{ y_i(m_1), \p{y_k(m_2)}\bigg\}_+  \\
= & \sum_{m_2\in\bbZ}\delta_{i,k}\delta_{m_1,m_2}x_j(n_1+m_2)\\
= & \delta_{i,k}e_{j,1}(m_1+n_1),
\end{align*}
for $i,\,k=1,\ldots,n$ and $j=2,\ldots,m.$
\begin{align*}
\big\{ e_{m+i,1}(m_1), \ e_{m+j,k}(n_1) \big\}_+ \ = \ \bigg\{ y_i(m_1),\ \sum_{m_2 \in \bbZ}  y_j(n_1+m_2) \p{x_k(m_2)} \bigg\}_+= 0,
\end{align*}
for $i,\,j=1,\ldots,n$ and $k=2,\ldots,m$.
\begin{align*}
\big[ e_{m+i,1}(m_1), \ e_{m+j,m+k}(n_1) \big] \
= &\bigg[ y_i(m_1),\ \sum_{m_2 \in \bbZ}  y_j(n_1+m_2) \p{y_k(m_2)} \bigg]\\
= & -\sum_{m_2\in\bbZ}  y_j(n_1+m_2)  \bigg\{ y_i(m_1), \p{y_k(m_2)}\bigg\}_+   \\
= & -\sum_{m_2\in\bbZ}   \delta_{i,k} \delta_{m_1,m_2}  y_j(n_1+m_2) \\
= &  -\delta_{i,k}  e_{j,1}(m_1+n_1),
\end{align*}
for $i,\,j,\,k=1,\ldots,n$.
\begin{align*}
\big[ e_{i,j}(m_1), \ e_{k,l}(n_1) \big]
= &  \summnmpnp  \bigg[ x_i(m_1+m_2) \p{x_j(m_2)}, x_k(n_1+n_2) \p{x_l(n_2)} \bigg] \\
= &  \summnmpnp \Bigg( x_i(m_1+m_2) \bigg[ \p{x_j(m_2)}, x_k(n_1+n_2) \bigg] \p{x_l(n_2)} \\
  & - x_k(n_1+n_2) \bigg[ \p{x_l(n_2)}, x_i(m_1+m_2) \bigg]  \p{x_j(m_2)} \Bigg) \\
= & \delta_{j,k}  e_{i,l}(m_1+n_1) - \delta_{l,i}  e_{k,j}(m_1+n_1),
\end{align*}
for $i,\,j,\,k,\,l=2,\ldots,m.$
\begin{align*}
\big[ e_{i,j}(m_1), \ e_{k,m+l}(n_1) \big]
= &  \summnmpnp  \bigg[ x_i(m_1+m_2) \p{x_j(m_2)}, x_k(n_1+n_2) \p{y_l(n_2)} \bigg] \\
= &  \summnmpnp  x_i(m_1+m_2)  \bigg[ \p{x_j(m_2)}, x_k(n_1+n_2) \bigg] \p{y_l(n_2)}\\
= & \delta_{j,k}e_{i,m+r}(m_1+n_1),
\end{align*}
for $i,\,j,\,k=2,\ldots,m$ and $l=1,\ldots,n.$
\begin{align*}
\big[ e_{i,j}(m_1), \ e_{m+k,l}(n_1) \big]
= &  \summnmpnp  \bigg[ x_i(m_1+m_2) \p{x_j(m_2)}, y_k(n_1+n_2) \p{x_l(n_2)} \bigg] \\
= &  -\summnmpnp y_k(n_1+n_2) \bigg[ \p{x_l(n_2)}, x_i(m_1+m_2) \bigg]  \p{x_j(m_2)} \\
= & -\delta_{i,l}e_{m+k,j}(m_1+n_1),
	\end{align*}
for $i,\,j,\,l=2,\ldots,m$ and $k=1,\ldots,n.$
\begin{align*}
\big[ e_{i,j}(m_1), \ e_{m+k,m+l}(n_1) \big]=\summnmpnp\bigg[ x_i(m_1+m_2) \p{x_j(m_2)}, y_k(n_1+n_2) \p{y_l(n_2)} \bigg] =  0,
\end{align*}
for $i,\,j=2,\ldots,m$ and $k,\,l=1,\ldots,n.$

\begin{align*}
\big\{ e_{i,m+j}(m_1), \ e_{k,m+l}(n_1) \big\}_+ = &  \summnmpnp  \bigg\{ x_i(m_1+m_2) \p{y_j(m_2)}, x_k(n_1+n_2) \p{y_l(n_2)} \bigg\}= 0,
\end{align*}
for $i,\,k=2,\ldots,m$ and $j,\,l=1,\ldots,n$.	
\begin{align*}
\big\{ e_{i,m+j}(m_1), \ e_{m+k,l}(n_1) \big\}_+
= &  \summnmpnp  \bigg\{ x_i(m_1+m_2) \p{y_j(m_2)}, y_k(n_1+n_2) \p{x_l(n_2)} \bigg\} \\
= &  \summnmpnp  \Bigg( x_i(m_1+m_2) \bigg\{ \p{y_j(m_2)}, y_k(n_1+n_2) \bigg\}_+ \p{x_l(n_2)} \\
  & + y_k(n_1+n_2) \bigg[ \p{x_l(n_2)}, x_i(m_1+m_2) \bigg] \p{y_j(m_2)} \Bigg) \\
= & \delta_{j,k} e_{i,l}(m_1+n_1) + \delta_{l,i}  e_{m+k,m+j}(m_1+n_1),
\end{align*}
for $i,\,r=l,\ldots,m$ and $k,\,j=1,\ldots,n$.
\begin{align*}
\big[ e_{i,m+j}(m_1), \ e_{m+k,m+l}(n_1) \big]
= &  \summnmpnp  \bigg[ x_i(m_1+m_2) \p{y_j(m_2)}, y_k(n_1+n_2) \p{y_l(n_2)} \bigg] \\
= &  \summnmpnp   x_i(m_1+m_2) \bigg\{ \p{y_j(m_2)}, y_k(n_1+n_2) \bigg\}_+ \p{y_l(n_2)} \\
= & \delta_{j,k} e_{i,m+l}(m_1+n_1),
\end{align*}
for $i,=2,\ldots,m$ and $j,\,k,\,l=1,\ldots,n$.
\begin{align*}
\big\{ e_{m+i,j}(m_1), \ e_{m+k,l}(n_1) \big\}_+
= \summnmpnp\bigg\{ y_i(m_1+m_2) \p{x_j(m_2)}, y_k(n_1+n_2) \p{x_l(n_2)} \bigg\}
= 0,
\end{align*}
for $i,\,k=1,\ldots,n$ and $j,\,l=2,\ldots,m$.	
\begin{align*}
\big[ e_{m+i,j}(m_1), \ e_{m+k,m+l}(n_1) \big]
= &  \summnmpnp  \bigg[ y_i(m_1+m_2) \p{x_j(m_2)}, y_k(n_1+n_2) \p{y_l(n_2)} \bigg] \\
= &  -\summnmpnp y_k(n_1+n_2) \bigg\{ \p{y_l(n_2)}, y_i(m_1+m_2) \bigg\}_+ \p{x_j(m_2)} \\
= & -\delta_{i,l}e_{m+k,j}(m_1+n_1),
\end{align*}
for $i,\,k,\,l=1,\ldots,n$ and $j=2,\ldots,m$.	
\begin{align*}
\big[ e_{m+i,m+j}(m_1), \ e_{m+k,m+l}(n_1) \big]
= &  \summnmpnp  \bigg[ y_i(m_1+m_2) \p{y_j(m_2)}, y_k(n_1+n_2) \p{y_l(n_2)} \bigg] \\
= &  \summnmpnp \Bigg( y_i(m_1+m_2)\bigg\{ \p{y_j(m_2)}, y_k(n_1+n_2) \bigg\}_+ \p{y_l(n_2)} \\
& - y_k(n_1+n_2) \bigg\{ \p{y_l(n_2)}, y_i(m_1+m_2) \bigg\}_+ \p{y_j(m_2)} \Bigg) \\
= & \delta_{j,k}  e_{m+i,m+l}(m_1+n_1) - \delta_{l,i}  e_{m+k,m+j}(m_1+n_1),
\end{align*}
for $i,\,j,\,k,\,l=1,\ldots,n.$
\end{proof}

We now define the following linear operator on $\widehat{W}$:
\begin{equation}
\begin{split}
D &= \sum_{i=2}^{m}\sum_{m_1\in \bbZ}m_1 x_i(m_1)\p{x_i(m_1)}+\sum_{j=1}^{n}\sum_{m_1\in \bbZ}m_1 y_j(m_1)\p{y_j(m_1)} \\
\end{split}
\end{equation}

\begin{cor}  \label{bigrep}
There is a Lie superalgebra homomorphism
\begin{displaymath}
\psi: \widehat{\mathfrak{gl}_{m|n}}(\bbC) \longrightarrow \mathfrak{gl}(\widehat{W})
\end{displaymath}
given by
\begin{equation*}
\psi\big( E_{i,j} \otimes t^{m_1} \big) \ = \ e_{i,j}(m_1), \quad \psi(K)=0, \quad \psi(d) = D
\end{equation*}
where $i,j =1, \ldots, m+n$.
Thus, $\widehat{W}$ is also a module over the Lie superalgebra $\widehat{\mathfrak{gl}_{m|n}}(\bbC)$.
\end{cor}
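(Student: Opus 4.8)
The plan is to reduce everything to Theorem~\ref{mainrep1} by checking only the relations that involve $d$, namely $[D,e_{i,j}(m_1)] = m_1\,e_{i,j}(m_1)$ and $[D,\psi(K)] = [D,0] = 0$. The conceptual point is that $D$ is nothing but a grading operator. First I would put a $\bbZ$-grading on $\widehat{W}$ by declaring $\deg x_i(m_1) = \deg y_k(m_1) = m_1$; since each generator of the ideal $\widehat{W}_2$ is a difference (or sum) of two monomials of the same total degree, this grading descends to a decomposition $\widehat{W} = \bigoplus_{d\in\bbZ}\widehat{W}^{(d)}$. A direct check on monomials then shows that $D$ acts on $\widehat{W}^{(d)}$ as multiplication by $d$; in particular $D$ is a well-defined even operator on $\widehat{W}$, and since each element of $\widehat{W}$ involves only finitely many variables, the infinite sum defining $D$ is locally finite.

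Next I would verify that every operator $e_{i,j}(m_1)$ defining the action of $\widetilde{\mathfrak{gl}_{m|n}}(\bbC)$ is homogeneous of degree $m_1$ with respect to this grading. This is immediate for $e_{i,1}(m_1) = x_i(m_1)$ and $e_{m+k,1}(m_1) = y_k(m_1)$, and for the quadratic operators such as $e_{i,j}(m_1) = \sum_{m_2}x_i(m_1+m_2)\partial_{x_j(m_2)}$ each summand shifts degree by $(m_1+m_2) - m_2 = m_1$. The same holds for $e_{1,1}(m_1)$, the scalar term $\mu\delta_{m_1,0}$ being supported on $m_1 = 0$; and then for $e_{1,i}(m_1) = \sum_{m_2}e_{1,1}(m_1+m_2)\partial_{x_i(m_2)}$ (and likewise $e_{1,m+k}(m_1)$) each summand shifts degree by $(m_1+m_2) + (-m_2) = m_1$, since $e_{1,1}(\ell)$ has degree $\ell$.

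Finally, for any linear operator $T$ on $\widehat{W}$ homogeneous of degree $k$ one has $[D,T] = kT$: on $w\in\widehat{W}^{(d)}$ one computes $DTw = (d+k)Tw$ while $TDw = d\,Tw$, so $(DT - TD)w = k\,Tw$; and because $D$ is even, no sign intervenes even when $T$ is odd. Applying this with $T = e_{i,j}(m_1)$ yields $[D,e_{i,j}(m_1)] = m_1\,e_{i,j}(m_1) = \psi\big([d, E_{i,j}\otimes t^{m_1}]\big)$, and $[D,\psi(K)] = 0 = \psi([d,K])$. Combined with Theorem~\ref{mainrep1}, which already establishes that $\psi$ respects every bracket internal to $\widetilde{\mathfrak{gl}_{m|n}}(\bbC)$, this shows $\psi$ preserves all the defining relations of $\widehat{\mathfrak{gl}_{m|n}}(\bbC)$ and is therefore a homomorphism of Lie superalgebras. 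There is no serious obstacle here; the only points requiring any care are the well-definedness of the grading modulo $\widehat{W}_2$ and the bookkeeping of degree shifts for the operators $e_{1,i}(m_1)$ and $e_{1,m+k}(m_1)$ that are assembled out of $e_{1,1}$.
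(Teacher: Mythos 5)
Your proposal is correct, and it reaches the same key identity $[D,e_{i,j}(m_1)]=m_1e_{i,j}(m_1)$ as the paper, but by a somewhat different mechanism. The paper works entirely at the operator level: it computes the adjoint brackets $[D,x_j(m_1)]=m_1x_j(m_1)$, $[D,y_j(m_1)]=m_1y_j(m_1)$, $[D,\p{x_j(m_1)}]=-m_1\p{x_j(m_1)}$, $[D,\p{y_j(m_1)}]=-m_1\p{y_j(m_1)}$, and then uses the Leibniz rule $[D,ab]=[D,a]b+a[D,b]$ (valid since $D$ is even) to propagate these to the quadratic operators $e_{i,j}(m_1)$, invoking Theorem \ref{mainrep1} for all brackets internal to $\widetilde{\mathfrak{gl}_{m|n}}(\bbC)$ exactly as you do. You instead introduce the $\bbZ$-grading of $\widehat{W}$ by total mode degree, check that it descends through $\widehat{W}_2$, identify $D$ as the grading operator, verify that each $e_{i,j}(m_1)$ (including $e_{1,1}(m_1)$ with its scalar term supported at $m_1=0$, and the composites $e_{1,i}(m_1)$, $e_{1,m+k}(m_1)$) is homogeneous of degree $m_1$, and conclude by the general fact that $[D,T]=kT$ for $T$ homogeneous of degree $k$. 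The two arguments encode the same computation, but your version is more structural and scales with no extra work to all the operator families at once, whereas the paper's Leibniz-rule route is more elementary and avoids having to argue that the grading is well defined on the quotient $\widehat{W}=\widehat{W}_1/\widehat{W}_2$ and that $D$ is locally finite --- points you correctly flag and handle. Both treatments of $K$ (namely $\psi(K)=0$, so the central term in the affine bracket must vanish on the image, which is precisely what Theorem \ref{mainrep1} with $\varphi(K)=0$ provides) coincide.
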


\begin{proof}
By Theorem \ref{mainrep1}, it suffices to show
\begin{equation*}
[D, e_{i,j}(m_1)]=m_1 e_{i,j}(m_1).
\end{equation*}

First we have
\begin{equation*}
[D,ab] = [D,a]b +a[D,b]
\end{equation*}
for elements $D,a,b \in \mathfrak{gl}(\widehat{W})$ with $D$ even.
Now we have the following relations
\begin{equation*}
\begin{split}
\big[D, x_j(m_1)\big]
&=  \sum_{i=2}^{m}\sum_{m_2\in \bbZ}m_2 \bigg[x_i(m_2)\p{x_i(m_2)}, x_j(m_1)\bigg]+\sum_{i=1}^{n}\sum_{m_2\in \bbZ}m_2 \bigg[y_i(m_2)\p{y_i(m_2)}, x_j(m_1)\bigg]  \\
&=  \sum_{i=2}^{m}\sum_{m_2\in \bbZ}m_2 x_i(m_2)\bigg[\p{x_i(m_2)}, x_j(m_1)\bigg]  \\
&=  \sum_{i=2}^{m}\sum_{m_2\in \bbZ}m_2 x_i(m_2) \delta_{i,j}\delta_{m_1,m_2}  \\
&=  m_1 x_j(m_1)
\end{split}
\end{equation*}
for $j=2,\ldots,m$.
\begin{equation*}
\begin{split}
\big[D, y_j(m_1)\big]
&=  \sum_{i=2}^{m}\sum_{m_2\in \bbZ}m_2 \bigg[x_i(m_2)\p{x_i(m_2)}, y_j(m_1)\bigg]+\sum_{i=1}^{n}\sum_{m_2\in \bbZ}m_2 \bigg[y_i(m_2)\p{y_i(m_2)}, y_j(m_1)\bigg]  \\
&=  \sum_{i=1}^{n}\sum_{m_2\in \bbZ}m_2 y_i(m_2)\bigg\{\p{y_i(m_2)}, y_j(m_1)\bigg\}_+  \\
&=  \sum_{i=1}^{n}\sum_{m_2\in \bbZ}m_2 y_i(m_2) \delta_{i,j}\delta_{m_1,m_2}  \\
&=  m_1 y_j(m_1)
\end{split}
\end{equation*}
for $j=1,\ldots,n$.
\begin{equation*}
\begin{split}
\bigg[D, \p{x_j(m_1)}\bigg]
&= \sum_{i=2}^{m}\sum_{m_2\in \bbZ}m_2 \bigg[x_i(m_2)\p{x_i(m_2)}, \p{x_j(m_1)}\bigg]+\sum_{i=1}^{n}\sum_{m_2\in \bbZ}m_2 \bigg[y_i(m_2)\p{y_i(m_2)}, \p{x_j(m_1)}\bigg]  \\
&=  -\sum_{i=2}^{m}\sum_{m_2\in \bbZ}m_2 \bigg[\p{x_j(m_1)}, x_i(m_2)\bigg]\p{x_i(m_2)}  \\
&=  -\sum_{i=2}^{m}\sum_{m_2\in \bbZ}m_2 \p{x_i(m_2)} \delta_{i,j}\delta_{m_1,m_2}  \\
&=  -m_1 \p{x_j(m_1)}
\end{split}
\end{equation*}
for $j=2,\ldots,m$.
\begin{equation*}
\begin{split}
\bigg[D, \p{y_j(m_1)}\bigg]
&=  \sum_{i=2}^{m}\sum_{m_2\in \bbZ}m_2 \bigg[x_i(m_2)\p{x_i(m_2)}, \p{y_j(m_1)}\bigg]+\sum_{i=1}^{n}\sum_{m_2\in \bbZ}m_2 \bigg[y_i(m_2)\p{y_i(m_2)}, \p{y_j(m_1)}\bigg]  \\
&=  -\sum_{i=1}^{n}\sum_{m_2\in \bbZ}m_2 \bigg\{\p{y_j(m_1)}, y_i(m_2)\bigg\}_+\p{y_i(m_2)}  \\
&=  -\sum_{i=1}^{n}\sum_{m_2\in \bbZ}m_2  \delta_{i,j}\delta_{m_1,m_2}\p{y_i(m_2)}  \\
&=  -m_1 \p{y_j(m_1)}
\end{split}
\end{equation*}
for $j=1,\ldots,n$.

Whence, for $i,\,j=2,\ldots,m$ we get
\begin{align*}
[D,e_{i,j}(m_1)]
&=\sum\limits_{m_2\in\bbZ}\Bigg([D,\ x_i(m_1+m_2)]\p{x_j(m_2)}+x_i(m_1+m_2)\bigg[D,\ \p{x_j(m_2)}\bigg]\Bigg)\\
&=(m_1+m_2)e_{i,j}(m_1)-m_2e_{i,j}(m_1)\\
&=m_1e_{i,j}(m_1)
\end{align*}
Similarly, we could show that
$$
[D, e_{i,j}(m_1)]=m_1e_{i,j}(m_1)
$$
for $i,\,j=1,\ldots,m+n$. Now the proof is completed.
\end{proof}

In the following, for convenience we denote $x_{m+k}(m_1)=y_k(m_1)$ for $k=1,\ldots,n$ and $m_1\in\bbZ$.
It is clear that $\widehat{W}=U(\widehat{\mathfrak{gl}_{m|n}}(\bbC)).1$, and
\begin{align*}
& e_{1,1} (0) \Big( x_2(m_{2,1})\ldots x_2(m_{2,k_2}) \ldots x_{m+n}(m_{m+n,1})\ldots x_{m+n}(m_{m+n,k_{m+n}})\Big) \\
= & (\mu-k_2-\ldots-k_{m+n})x_2(m_{2,1})\ldots x_2(m_{2,k_2})\ldots x_{m+n}(m_{m+n,1})\ldots x_{m+n}(m_{m+n,k_{m+n}})
\end{align*}
and
\begin{align*}
&e_{i,i}(0)\Big(x_2(m_{2,1})\ldots x_2(m_{2,k_2})\ldots x_{m+n}(m_{m+n,1})\ldots x_{m+n}(m_{m+n,k_{m+n}})\Big)\\
= & k_ix_2(m_{2,1})\ldots x_2(m_{2,k_2})\ldots x_{m+n}(m_{m+n,1})\ldots x_{m+n}(m_{m+n,k_{m+n}}).
\end{align*}
for $i=2\ldots,m+n$. For the derivation $D$, we have
\begin{align*}
& D\Big(x_2(m_{2,1})\ldots x_2(m_{2,k_2})\ldots x_{m+n}(m_{m+n,1})\ldots x_{m+n}(m_{m+n,k_{m+n}})\Big)\\
= & \Big(\sum\limits_{i=2}^{m+n}\sum\limits_{j=1}^{k_{i}}m_{i,j}\Big)x_2(m_{2,1})\ldots x_2(m_{2,k_2})\ldots x_{m+n}(m_{m+n,1})\ldots x_{m+n}(m_{m+n,k_{m+n}})
\end{align*}
In the following, the weight $ \lambda\in H^* $ will be denoted as
$$
\Big(\lambda\big(E_{1,1}(0)\big),\ldots,\lambda\big(E_{m+n,m+n}(0)\big),0,\lambda(d)\Big).
$$

\begin{thm}
If $\mu\ne 0$, then $\widehat{W}$ is an infinite-dimensional irreducible $\widehat{\mathfrak{gl}_{m|n}}(\bbC)$-module.
\end{thm}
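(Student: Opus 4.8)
Since $\widehat{W}=U(\widehat{\mathfrak{gl}_{m|n}}(\bbC)).1$, it suffices to prove that every nonzero submodule $U\subseteq\widehat{W}$ contains the vacuum $1$ (infinite-dimensionality is clear, e.g.\ $\{x_2(0)^k.1\mid k\ge 0\}$ is linearly independent). First I would use that the operators $e_{i,i}(0)=\psi(E_{i,i}\otimes 1)$, $1\le i\le m+n$, together with $D=\psi(d)$, commute and act diagonally on the monomial basis, hence on $U$; so $U$ contains a nonzero joint eigenvector $v$. All monomials occurring in such a $v$ then share the same \emph{length} $\ell$ (the total number of variable--factors), because $e_{1,1}(0)$ acts by the scalar $\mu-\ell$ and the values $\mu-\ell$, $\ell\ge 0$, are pairwise distinct. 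Moreover the relation $[e_{1,1}(0),e_{1,j}(p)]=e_{1,j}(p)$ (checked in the proof of Theorem~\ref{mainrep1}, for $2\le j\le m+n$, $p\in\bbZ$, with the convention $x_{m+k}=y_k$) shows each $e_{1,j}(p)$ sends length-$\ell$ vectors into length-$(\ell-1)$ vectors.

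The proof then goes by downward induction on $\ell$. If $\ell=0$ then $v\in\bbC 1$ and we are done. If $\ell\ge 1$, the crux is to show $e_{1,j}(p).v\ne 0$ for some $j,p$: this is a nonzero element of $U$ of length $\ell-1$, and induction finishes. The case $\ell=1$ is immediate: writing $v=\sum_{i,a}c_{i,a}\,x_i(a).1$ and using $e_{1,j}(p).x_i(a).1=\delta_{i,j}\,e_{1,1}(p+a).1=\delta_{i,j}\,\mu\,\delta_{p+a,0}.1$, any nonzero $c_{i_0,a_0}$ gives $e_{1,i_0}(-a_0).v=\mu\,c_{i_0,a_0}.1\ne 0$, where $\mu\ne 0$ is used decisively.

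For $\ell\ge 2$ I would establish the non-vanishing by a leading-term analysis. Let $P$ be the largest mode occurring in $v$, pick a color $j$ with $x_j(P)$ dividing some monomial of $v$, and fix a term order on monomials (for instance the lexicographic order on the weakly-decreasing list of mode-indices, refined by the colors). Applying $e_{1,j}(-P)=\sum_{m_2}e_{1,1}(m_2-P)\p{x_j(m_2)}$, the summand $m_2=P$ contributes $(\mu-\ell+1)\p{x_j(P)}v$, whose leading monomial is a nonzero multiple of $M_0/x_j(P)$ for the leading monomial $M_0$ of $v$; the summands $m_2<P$ and the action on lower monomials of $v$ involve only the purely mode-shifting (quadratic) parts of $e_{1,1}$, and one has to verify that these lower the term order enough that they cannot cancel that leading contribution. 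The hypothesis $\mu\ne 0$ is what keeps the relevant scalar coefficients nonzero; in the degenerate subcase $\mu=\ell-1$ (so $\mu-\ell+1=0$) one would instead take $e_{1,j}(p)$ with $p\ne -P$, so that the $\mu$-free mode-shift part of $e_{1,1}$ carries the surviving term. (When $\mu=0$ this fails: for $m\ge 2$ the vector $x_2(1)x_2(-1).1-x_2(0)^2.1$ is annihilated by every $e_{1,j}(p)$, and the span of all monomials of length $\ge 1$ is then a proper submodule, so $\widehat{W}$ is reducible, as one expects.)

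The main obstacle is exactly this last step. Because the operators $e_{1,j}(p)$ are not pure derivations --- they carry the quadratic ``screening'' terms built into $e_{1,1}$, and removing a mode-factor can genuinely reorder monomials --- one must choose the term order and the shift $p$ carefully and push through the combinatorial bookkeeping showing that no cancellation destroys the leading term while the coefficients stay nonzero; this is where $\mu\ne0$ is genuinely needed at every stage of the descent, not merely at the bottom. Everything else (the reduction to ``$U\ni 1$'', the simultaneous diagonalization, and the base cases) is routine.
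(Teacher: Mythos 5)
Your overall skeleton --- every nonzero submodule contains a uniform-length weight vector, the operators $e_{1,j}(p)$ lower length by one, and irreducibility reduces to showing that no nonzero vector of positive length is annihilated by all $e_{1,j}(p)$ when $\mu\neq0$ --- is sound, and it is essentially the frame of the paper's proof as well. But the crucial step, the non-vanishing of some $e_{1,j}(p).v$, is exactly what you leave as ``combinatorial bookkeeping,'' and the mechanism you propose for it (survival of a leading monomial under a term order) is false. Take $m\geq2$, $\ell=2$ and $v=x_2(1)x_2(-1).1-x_2(0)^2.1$, so $P=1$, $j=2$, and apply $e_{1,2}(-1)$. The summand $m_2=P$ contributes $(\mu-\ell+1)\,x_2(-1)=(\mu-1)\,x_2(-1)$; but the summand $m_2=-1$ acting on the leading monomial gives $-x_2(-1)$, and the summand $m_2=0$ acting on the lower monomial $-x_2(0)^2$ gives $+2\,x_2(-1)$ --- all three contributions land on the \emph{same} monomial, so no term order can separate them. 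The total is $\mu\,x_2(-1)$: the ``leading contribution'' is genuinely altered, and at $\mu=0$ (where, as your own example notes, $v$ is annihilated by every $e_{1,j}(p)$) it is destroyed outright even though $\mu-\ell+1=-1\neq0$ there. So $\mu\neq0$ does not act by keeping a leading coefficient nonzero; what actually happens is that the $\mu$-free (mode-shifting) contributions cancel identically among themselves and only $\mu$ times something nonzero survives. Your sketch contains no device for proving that cancellation pattern, and your fallback for the degenerate case $\mu=\ell-1$ (``take $p\neq-P$'') is not worked out either; this is a genuine gap, not routine bookkeeping.

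That cancellation analysis is precisely the content of the paper's argument. For $\widehat{\mathfrak{gl}_{1|n}}(\bbC)$ it expands $e_{1,j}(r)$ on a general monomial and splits the result as $A_j(r)+B_j(r)$, where $A_j(r)$ carries the factor $\mu\,\delta_{r+m_{j,k},0}$ and $B_j(r)$ is $\mu$-free; evaluating the singular-vector condition at $r$ large (where $A_j(r)=0$ automatically) forces the $B$-part to vanish for all $r$, and then specializing $r=-m^{\beta}_{j,i}$ isolates $\mu$ times a nonzero linear combination, so $\mu\neq0$ kills every positive-length component. For $m\geq2$ the paper does not argue uniformly as you propose: it first applies operators $e_{2,m+1}(b)$ with $b$ large to show that a candidate singular vector cannot involve the odd variables, and then reduces to the purely even case $\widehat{\mathfrak{gl}_{m|0}}(\bbC)$, quoting Gao--Zeng. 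Until you supply an argument of one of these kinds --- in particular a proof that the $\mu$-independent terms cancel identically while the $\mu$-terms do not --- the descent step of your induction, and hence the proof, is incomplete.
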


\begin{proof}
We split the proof into three cases: $\widehat{\mathfrak{gl}_{m|0}}(\bbC)=\widehat{\mathfrak{gl}_m}(\bbC)$,
$\widehat{\mathfrak{gl}_{1|n}}(\bbC)$ and $\widehat{\mathfrak{gl}_{m|n}}(\bbC)$ ($m\geq2$).
The first case was proved by Gao and Zeng in \cite{GZ2}.
For second case, we have
\begin{align*}
e_{1,j}(r&) \Big(x_2(m_{2,1}) \cdots x_2(m_{2,k_2}) x_3(m_{3,1}) \cdots x_3(m_{3,k_3}) \cdots x_{n+1}(m_{n+1,1}) \cdots
    x_{n+1}(m_{n+1,k_{n+1}}) \Big)     \\
= & \sum_{i=1}^{k_2} (-1)^{i-1} e_{2,j}(r+m_{2,i})e_{2,1}(m_{2,1}) \cdots \widehat{e_{2,1}(m_{2,i})} \cdots e_{2,1}(m_{2,k_2})
    \cdots e_{n+1,1}(m_{n+1,1}) \cdots e_{n,1}(m_{n+1,k_{n+1}}).1   \\
& +(-1)^{k_2} e_{2,1}(m_{2,1}) \cdots e_{2,1}(m_{2,k_2})e_{1,j}(r) e_{3,1}(m_{3,1}) \cdots e_{3,1}(m_{3,k_3}) \cdots e_{n+1,1}(m_{n+1,k_{n+1}}).1     \\
= & \cdots \cdots  \\
= & \sum_{l=2}^{j-1}\sum_{i=1}^{k_l} (-1)^{\sum\limits_{s=2}^{l-1}k_s+i-1} e_{l,j}(r+m_{l,i})
    e_{2,1}(m_{2,1}) \cdots \widehat{e_{l,1}(m_{l,i})} \cdots e_{n+1,1}(m_{n+1,1}) \cdots e_{n+1,1}(m_{n+1,k_{n+1}}).1   \\
  & +(-1)^{\sum\limits_{i=2}^{j-1}k_i}e_{2,1}(m_{2,1}) \cdots e_{2,1}(m_{2,k_2}) \cdots e_{1,j}(r) e_{j,1}(m_{j,1})
    \cdots e_{j,1}(m_{j,k_j}) \cdots e_{n+1,1}(m_{n+1,k_{n+1}}).1  \\
= & \sum_{l=2}^{j-1}\sum_{i=1}^{k_l} (-1)^{\sum\limits_{s=2}^{l-1}k_s+i-1} \bigg(e_{2,1}(m_{2,1}) \cdots
    \widehat{e_{l,1}(m_{l,i})} \cdots e_{n+1,1}(m_{n+1,1}) \cdots e_{n+1,1}(m_{n+1,k_{n+1}})e_{l,j}(r+m_{l,i}).1   \\
  & +\sum_{k=1}^{k_j} e_{2,1}(m_{2,1}) \cdots \widehat{e_{l,1}(m_{l,i})} \cdots
     \widehat{e_{j,1}(m_{j,k})} e_{l,1}(r+m_{l,i}+m_{j,k})\cdots e_{n+1,1}(m_{n+1,k_{n+1}}).1\bigg)   \\
  & +(-1)^{\sum\limits_{i=2}^{j-1}k_i}\sum_{k=1}^{k_j}(-1)^{k-1}
     e_{2,1}(m_{2,1}) \cdots \widehat{e_{j,1}(m_{j,k})} (e_{1,1}+ e_{j,j})(r+m_{j,k}) \cdots e_{n+1,1}(m_{n+1,k_{n+1}}).1 \\
  & +(-1)^{\sum\limits_{i=2}^{j}k_i}e_{2,1}(m_{2,1}) \cdot \cdot e_{j,1}(m_{j,1})
    \cdots e_{j,1}(m_{j,k_j}) e_{1,j}(r)e_{j+1,1}(m_{j+1,1}) \cdots e_{n+1,1}(m_{n+1,k_{n+1}}).1  \\
= & \sum_{l=2}^{j-1}\sum_{i=1}^{k_l}\sum_{k=1}^{k_j} (-1)^{\sum\limits_{s=2}^{l-1}k_s+i-1} e_{2,1}(m_{2,1}) \cdots
    \widehat{e_{l,1}(m_{l,i})} \cdots \widehat{e_{j,1}(m_{j,k})} e_{l,1}(r+m_{l,i}+m_{j,k}) \cdots .1   \\
  & -(-1)^{\sum\limits_{i=2}^{j-1}k_i}\sum_{k=1}^{k_j}(-1)^{k} e_{2,1}(m_{2,1}) \cdots
    \widehat{e_{j,1}(m_{j,k})}\cdots (e_{1,1}+ e_{j,j})(r+m_{j,k}) e_{j+1,1}(m_{j+1,1}) \cdots .1 \\
  & +\sum_{l=j+1}^{n+1} \sum_{i=1}^{k_l} (-1)^{\sum\limits_{s=2}^{l-1}k_s+i-1}
    e_{2,1}(m_{2,1})  \cdots \widehat{e_{l,1}(m_{l,i})} \cdots \cdots e_{n+1,1}(m_{n+1,k_{n+1}})e_{l,j}(r+m_{l,i}).1  \\
  & + (-1)^{\sum\limits_{i=2}^{n+1}k_i} e_{2,1}(m_{2,1}) \cdots e_{2,1}(m_{2,k_2})
    \cdots \cdots e_{n+1,1}(m_{n+1,1}) \cdots e_{n+1,1}(m_{n+1,k_{n+1}}) e_{1,j}(r).1  \\
= & \sum_{l=2}^{j-1}\sum_{i=1}^{k_l}\sum_{k=1}^{k_j}
    (-1)^{\sum\limits_{s=2}^{l-1}k_s+i-1} e_{2,1}(m_{2,1}) \cdots
    \widehat{e_{l,1}(m_{l,i})} \cdots \widehat{e_{j,1}(m_{j,k})}
    e_{l,1}(r+m_{l,i}+m_{j,k}) \cdots .1   \\
  & +(-1)^{\sum\limits_{i=2}^{j-1}k_i} \sum_{l=j+1}^{n}\sum_{k=1}^{k_j}\sum_{i=1}^{k_l}(-1)^{k} e_{2,1}(m_{2,1}) \cdots
    \widehat{e_{j,1}(m_{j,k})} \cdots \widehat{e_{l,1}(m_{l,i})}e_{l,1}(r+m_{l,i}+m_{j,k}) \cdots .1   \\
  & -(-1)^{\sum\limits_{i=2}^{j-1}k_i}\sum_{k=1}^{k_j}(-1)^{k} e_{2,1}(m_{2,1}) \cdots \widehat{e_{j,1}(m_{j,k})}\cdots
    e_{n+1,1}(m_{n+1,k_{n+1}})(e_{1,1}+ e_{j,j})(r+m_{j,k}).1  \\
= & \sum_{l=2}^{j-1}\sum_{i=1}^{k_l}\sum_{k=1}^{k_j} (-1)^{\sum\limits_{s=2}^{l-1}k_s+i-1} e_{2,1}(m_{2,1}) \cdots
    \widehat{e_{l,1}(m_{l,i})} \cdots \widehat{e_{j,1}(m_{j,k})} e_{l,1}(r+m_{l,i}+m_{j,k}) \cdots .1   \\
  & +(-1)^{\sum\limits_{i=2}^{j-1}k_i} \sum_{l=j+1}^{n+1}\sum_{k=1}^{k_j}\sum_{i=1}^{k_l}(-1)^{k} e_{2,1}(m_{2,1}) \cdots
    \widehat{e_{j,1}(m_{j,k})} \cdots  \widehat{e_{l,1}(m_{l,i})}e_{l,1}(r+m_{l,i}+m_{j,k}) \cdots .1   \\
  & -(-1)^{\sum\limits_{i=2}^{j-1}k_i} \mu \sum_{k=1}^{k_j}\delta_{r+m_{j,k},0} (-1)^{k}
    e_{2,1}(m_{2,1}) \cdots \widehat{e_{j,1}(m_{j,k})}\cdots e_{n+1,1}(m_{n+1,1})\cdots e_{n+1,1}(m_{n+1,k_{n+1}}).1 \\
= & A_j(r)+B_j(r)
\end{align*}
where
\begin{align*}
A_j(r) = & -(-1)^{\sum\limits_{i=2}^{j-1}k_i} \mu \sum_{k=1}^{k_j}\delta_{r+m_{j,k},0} (-1)^{k}
            e_{2,1}(m_{2,1}) \cdots \widehat{e_{j,1}(m_{j,k})}\cdots e_{n+1,1}(m_{n+1,1})\cdots e_{n+1,1}(m_{n+1,k_{n+1}}).1 \, , \\
B_j(r) = & \sum_{l=2}^{j-1}\sum_{i=1}^{k_l}\sum_{k=1}^{k_j} (-1)^{\sum\limits_{s=2}^{l-1}k_s+i-1} e_{2,1}(m_{2,1}) \cdots
       \widehat{e_{l,1}(m_{l,i})} \cdots \widehat{e_{j,1}(m_{j,k})}  e_{l,1}(r+m_{l,i}+m_{j,k}) \cdots .1   \\
  & +(-1)^{\sum\limits_{i=2}^{j-1}k_i} \sum_{l=j+1}^{n+1}\sum_{k=1}^{k_j}\sum_{i=1}^{k_l}(-1)^{k}
    e_{2,1}(m_{2,1}) \cdots \widehat{e_{j,1}(m_{j,k})} \cdots \widehat{e_{l,1}(m_{l,i})}e_{l,1}(r+m_{l,i}+m_{j,k}) \cdots . 1.
\end{align*}
Let $Y=Y_0+\sum\limits_{\alpha \in I}Y_{\alpha} e_{2,1}(m^{\alpha}_{2,1}) \cdots e_{2,1}(m^{\alpha}_{2,k_{2,\alpha}})
\cdots e_{n+1,1}(m^{\alpha}_{n+1,1}) \cdots e_{n+1,1}(m^{\alpha}_{n+1,k_{n+1,\alpha}})$
be a singular vector of a submodule $U$ of $\widehat{W}$,
where $k_{\alpha}=\sum\limits_{i=2}^{n+1}k_{i,\alpha} \ge 1$,
$Y_0 \in \bbC$ and $m^{\alpha}_{l,1}< \cdots < m^{\alpha}_{l,k_{\alpha}}$,
then $e_{1,j}(r)Y=0$, for all $2 \le j \le n+1$ and $r \in \bbZ$.
First of all, for $r$ big enough, we have
$$
0=e_{1,j}(r)Y=\sum_{\alpha \in I}Y_{\alpha}A_j^{\alpha}(r)
+ \sum_{\alpha \in I}Y_{\alpha}B_j^{\alpha}(r)=\sum_{\alpha \in I}Y_{\alpha}B_j^{\alpha}(r) \, .
$$
From this relation, we can easily see
that, for all $r \in \bbZ$, $\sum\limits_{\alpha \in
I}Y_{\alpha}B_j^{\alpha}(r)=0$. Therefore
$$
\sum_{\alpha \in I}Y_{\alpha}A_j^{\alpha}(r)=0 \; ,
\forall \; r \in \bbZ, \, 2 \le j \le n+1 \, .
$$
Especially, $\sum\limits_{\alpha \in I}Y_{\alpha}A_j^{\alpha}
(-m^{\beta}_{j,i})=0$, for all $i,j \in \bbZ$ and $\beta \in I$, i.e.,
$$
\mu \sum_{\alpha \in I}Y_{\alpha} \# \{l \, | \,
m^{\alpha}_{j,l}=m^{\beta}_{j,i} \} e_{2,1}(m^{\alpha}_{2,k_{2,\alpha}})
\cdots \widehat{e_{j,1}(m^{\beta}_{j,i})} \cdots
e_{n+1,1}(m^{\alpha}_{n+1,k_{n+1,\alpha}})=0.
$$
Since all nonzero elements $e_{2,1}(m^{\alpha}_{2,k_{2,\alpha}}) \cdots
\widehat{e_{j,1}(m^{\beta}_{j,i})} \cdots e_{n+1,1}(m^{\alpha}_{n+1,k_{n+1,\alpha}})$
are linear independent and $k_{\alpha} \ge 1$,
we have that $\mu \ne 0$ implies $Y_{\alpha}=0$ for all
$\alpha \in I$. Thus $Y=Y_0 \in \bbC^*$,
i.e., any singular vector of $U$ is a nonzero scalar.
Therefore, if $\mu \neq 0$, then $\widehat{W}$ is an irreducible $\widehat{\mathfrak{gl}_{1|n}}(\bbC)$-module.

For the last one, let $\succ$ be the \textbf{lexicographical
total order} on $\bbZ^s$ for any $s\in\bbZ_+$, that is, for
$\mathbf{a},\,\mathbf{b}\in\bbZ^s$
$$
\mathbf{a} \succ\mathbf{b}\Leftrightarrow \text{ there exists } j \in\mathbb{N} \text{ such that } (a_i=b_i, \forall 1\leq i<j \leq s) \text{ and } a_j>b_j
$$
Any element in $\widehat{W}$ could be expressed in the following form
\begin{equation*}
\begin{split}
v=&\sum\limits_{\alpha\in I}A_{\alpha}x_2(m_{2,1}^{\alpha})\cdots x_2(m_{2,k_2}^{\alpha})\cdots x_{m}(m_{m,1}^{\alpha})\cdots x_m(m_{m,k_m}^{\alpha})\\
&x_{m+1}(m_{m+1,1}^{\alpha})\cdots x_{m+1}(m_{m+1,k_{m+1}}^{\alpha})\cdots x_{m+n}(m_{m+n,1}^{\alpha})\cdots x_{m+n}(m_{m+n,k_{m+n}}^{\alpha})\\
=&\sum\limits_{\alpha\in I'}x_{m+1}(m_{m+1,1}^{\alpha})\cdots x_{m+1}(m_{m+1,k_{m+1}}^{\alpha})Y_{\alpha}
\end{split}
\end{equation*}
where $I'$ is a subset of the finite index set $I$ and $Y_{\alpha}$ is a nonzero element without terms involving $x_{m+1}(*)$ for
any $\alpha\in I'$. Moreover, we may assume that
$$
m_{m+1,1}^{\alpha}>m_{m+1,2}^{\alpha}>\ldots>m_{m+1,k_{m+1}}^{\alpha}.
$$
Take a set of big enough integers
$b_{m+1,1}>b_{m+1,2}>\ldots>b_{m+1,k_{m+1}},$ then we have
\begin{equation*}
e_{2,m+1}(b_{m+1,k_{m+1}})\ldots e_{2,m+1}(b_{m+1,1})v =
x_2(m_{m+1,1}^{\beta}+b_{m+1,1})\cdots x_2(m_{m+1,k_{m+1}}^{\beta}+b_{m+1,k_{m+1}})Y_{\beta}+Y
\end{equation*}
where $\beta\in I'$ is the maximal element under the lexicographical total order of $\bbZ^{k_{m+1}}$.
Since, by the choice of $b_{m+1,i}$, we know $Y$ and
$x_2(m_{m+1,1}^{\beta}+b_{m+1,1})\cdots x_2(m_{m+1,k_{m+1}}^{\beta}+b_{m+1,k_{m+1}})Y_{\beta}$ are linearly independent,
hence $v$ could not be a singular vector of $\widehat{W}$ unless $k_{m+1}=0$.
Inductively, we can reduce the last case to the case of $\widehat{\mathfrak{gl}_{m|0}}(\bbC)$.
The proof is therefore completed.
\end{proof}


\begin{thebibliography}{99}

\bibitem[BE]{BE} G. Benkart, A. Elduque, Lie superalgebras graded by the root system $A(m, n)$, {\it J. Lie Theory} {\bf 13} (2003) 387--400.

\bibitem[BGK]{BGK} S. Berman, Y. Gao, Y. Krylyuk, Quantum tori and the structure of elliptic quasi-simple Lie algebras, {\it J. Funct. Anal.} {\bf 135} (1996) 339--389.

\bibitem[BCG]{BCG} S.Bhargava, H. Chen, Y. Gao, A family of representations of the Lie superalgebra ${\widehat{\mathfrak{gl}_{1|l-1}}\left(\mathbb{C}_q\right)}$, {\it J. Algebra.} {\bf 386} (2013) 61--76.


\bibitem[CGS]{CGS} H. Chen, Y. Gao, S. Shang, $B(0,N)$-graded Lie superalgebras coordinatized by quantum tori, {\it Sci. China Ser. A} {\bf 49} (2006) 1740--1752.

\bibitem[CW]{CW} S.-J. Cheng, W. Wang, Dualities and representations of Lie superalgebras, {\it Graduate Studies in Mathematics} {\bf 144} American Mathematical Society, Providence, RI, 2012.

\bibitem[CS]{CS} H. Crystal, R. Shifra, Simplicity of vacuum modules over affine Lie superalgebras. {\it J. Algebra}
{\bf 321}  (2009)  no. 10, 2861--2874.

\bibitem[EFK]{EFK} P. Etingof, I. Frenkel, A. Kirillov, Lectures on Representation Theory and Knizhnik-Zamolodchikov Equations,  {\it Mathematical Surveys and Monographs} {\bf 58} American Mathematical Society, Providence, RI, 1998. xiv+198 pp.

\bibitem[FF]{FF} B. Feigin, E. Frenkel, Affine Kac-Moody algebras and semi-infinite flag manifolds, {\it Comm. Math. Phys.} {\bf 128} (1990) 161--189.

\bibitem[FN]{FN} A. Frolicher, A. Nijenhuis, Theory of vector-valued differential forms I: Derivations of the graded ring of differential forms, {\it Nederl. Akad. Wetensch. Proc. Ser. A.} {\bf 59} (= {\it Indag. Math.} {\bf 18}) (1956) 338--359.

\bibitem[GZ1]{GZ1} Y. Gao, Z. Zeng, Hermitian representations of the extended affine Lie algebra $\widetilde{\mathfrak{gl}_2(\mathbb{C}_q)}$, {\it Adv. Math.} {\bf 207} (2006) 244--265.

\bibitem[GZ2]{GZ2} Y. Gao, Z. Zeng, Irreducible Wakimoto-like modules for the affine Lie algebra $\widehat{\mathfrak{gl}_n}$.  Quantized algebra and physics,  52--67, {\it Nankai Ser. Pure Appl. Math. Theoret. Phys.}, {\bf 8}, World Sci. Publ., Hackensack, NJ, 2012.

\bibitem[K]{K} V.G. Kac, Lie superalgebras, {\it Adv. Math.} {\bf 26} (1977) 8--96.

\bibitem[M]{M} Y. I. Manin, Topics in Noncommutative Geometry, {\it M. B. Porter Lectures} Princeton University Press, Princeton, NJ, 1991. viii+164 pp.

\bibitem[RZ]{RZ} S.E. Rao, K. Zhao, On integrable representations for toroidal Lie superalgebras, {\it Contemp.Math.} {\bf 343} (2004), 243--261.

\bibitem[S]{S} V. Serganova, Kac-Moody superalgebras and integrability. {\it Developments and trends in infinite-dimensional Lie theory} {\bf 288} (2011) 169--218.

\bibitem[W1]{W1} M. Wakimoto, Extended affine Lie algebras and a certain series of Hermitian representations, Preprint, 1985.

\bibitem[W2]{W2} M. Wakimoto, Fock representations of the affine Lie algebra $A^{(1)}_1$, {\it Comm. Math. Phys.} {\bf 104} (1986) 605--609.

\bibitem[WZ]{WZ} Y. Wu, R. Zhang,  Integrable representations of affine $A(m,n)$ and $C(m)$ superalgebras. {\it arXiv: 1309:1533.}

\bibitem[Z1]{Z1} Z. Zeng, Unitary representations of the extended affine Lie algebra $\widetilde{\mathfrak{gl}_{3}(\bbC_q)}$, {\it Pacific J. Math.} {\bf 233} (2007) 481--509.

\bibitem[Z2]{Z2} Z. Zeng, A Class of Irreducible Modules for the Extended Affine Lie Algebra $\widetilde{\mathfrak{gl}_l({\mathbb{C}_q})}$, {\it Sci. China Math.} {\bf 54} (2011) 1089--1099.

\end{thebibliography}
\end{document}